\newcommand{\belabel}[1]{\begin{equation}\label{#1}}
\newtheorem{mthm}{Theorem}
\newcommand{\vf}{\varphi}
\newcommand{\C}{{\mathbb C}}
\newcommand{\rr}{\mathbb{R}}
\newcommand{\CC}{{\mathbb C}}
\newcommand{\II}{{\mathbb I}}
\newcommand{\RR}{{\mathbb R}}
\newcommand{\bM}{{\overline{\cal M}}}
\newcommand{\bS}{\overline{S}}
\newcommand{\bg}{{\overline{\mathbf{g}}}}
\newcommand{\bnab}{{\overline{\nabla}}}
\newcommand{\bR}{{\overline{\mathrm{R}}}}
\newcommand{\R}{{\mathrm{R}}}
\newcommand{\bRic}{{\overline{\mathrm{Ric}}}}
\newcommand{\bscal}{{\overline{\mathrm{scal}}}}
\newcommand{\B}{{\mathrm{B}}}
\newcommand{\fX}{{\mathfrak{X}}}
\newcommand{\grad}{{\mathrm{grad}}}
\newcommand{\Hess}{{\mathrm{Hess}}}
\newcommand{\diver}{{\mathrm{div}}}
\renewcommand{\div}{{\mathrm{div}}}
\newcommand{\tr}{\mathrm{tr}}
\newcommand{\prT}{\mathrm{pr}_{T^\perp}}
\newcommand{\cL}{{\cal L}}
\newcommand{\Id}{{\mathrm{Id}}}
\newcommand{\F}{{\cal F}}
\newcommand{\beq}{\begin{eqnarray*}}
\newcommand{\eeq}{\end{eqnarray*}}
\newcommand{\be}{\begin{eqnarray}}
\newcommand{\ee}{\end{eqnarray}}
\newcommand{\Ric}{{\mathrm{Ric}}}
\newcommand{\beqn}{\begin{equation}}
\newcommand{\eeqn}{\end{equation}}
\theoremstyle{definition}
\newtheorem{re}{Remark}[section]
\newtheorem{bsp}{Example}[section]
\newtheorem*{bsp*}{Example}
\newtheorem*{def*}{Definition}
\theoremstyle{plain}
\newtheorem{Lemma}{Lemma}[section]
\newtheorem*{lem*}{Lemma}
\newtheorem{Proposition}{Proposition}[section]
\newtheorem{Corollary}{Corollary}[section]
\newtheorem{Theorem}{Theorem}[section]
\newtheorem*{theo*}{Theorem}
\newtheorem*{conj*}{Conjecture}
\newcommand{\cU}{{\cal U}}
\newcommand{\M}{{\cal M}}
\newcommand{\cF}{{\cal F}}
\newcommand{\g}{\mathbf{g}}
\renewcommand{\II}{\mathrm{II}}
\newcommand{\W}{\mathrm{W}}
\newcommand{\A}{\mathrm{A}}
\newcommand{\T}{\mathrm{T}}
\newcommand{\scal}{\mathrm{scal}}
\newcommand{\del}{\partial}
\newcommand{\bleml}[1]{\begin{Lemma} \label{#1}}
\newcommand{\blem}{\begin{Lemma}}
\newcommand{\elem}{\end{Lemma}}
\newcommand{\Lem}[1]{Lemma~\ref{#1}}
\newcommand{\btheo}{\begin{Theorem}}
\newcommand{\btheol}[1]{\begin{Theorem}\label{#1}}
\newcommand{\etheo}{\end{Theorem}}
\newcommand{\bpropl}[1]{\begin{Proposition} \label{#1}}
\newcommand{\bprop}{\begin{Proposition}}
\newcommand{\eprop}{\end{Proposition}}
\newcommand{\Prop}[1]{Proposition~\ref{#1}}
\newcommand{\bcorl}[1]{\begin{Corollary} \label{#1}}
\newcommand{\bcor}{\begin{Corollary}}
\newcommand{\ecor}{\end{Corollary}}
\newcommand{\bbem}{\begin{re}}
\newcommand{\ebem}{\end{re}}
\newcommand{\bprf}{\begin{proof}}
\newcommand{\eprf}{\end{proof}}
\newcommand{\pr}{\mathrm{pr}}
\begin{document}
\title[Cauchy problems for Lorentzian manifolds with special
holonomy]{Cauchy problems for Lorentzian manifolds\\[1mm]with special
holonomy}

\subjclass[2010]{Primary  53C50, 53C27; Secondary  53C44, 35A10, 83C05}
\keywords{Lorentzian manifolds, special holonomy, parallel spinors, parallel null vectors, generalised Killing spinors, Cauchy problem}

\author{Helga Baum}
\address[Baum \& Lischewski]{Humboldt-Universit\"{a}t zu Berlin,
Institut f\"{u}r Mathematik, Rudower Chaus\-see~25, 12489~Berlin, Germany}\email{baum@math.hu-berlin.de \& lischews@math.hu-berlin.de}
\author{Thomas Leistner}\address[Leistner]{School of Mathematical Sciences, University of Adelaide, SA~5005,
Australia} \email{thomas.leistner@adelaide.edu.au}

\author{Andree Lischewski} 
\thanks{The
 authors acknowledge support from the Australian Research
Council through the grants FT110100429 and DP120104582 and from the Collaborative Research Centre
647 ``Space-Time-Matter'' of the German Research Foundation.}
\begin{abstract}
On a  Lorentzian manifold the existence of a parallel null vector field implies certain constraint conditions on the induced Riemannian geometry of a space-like hypersurface. We will derive these constraint conditions and, conversely, show that every real analytic Riemannian manifold satisfying the constraint conditions can be extended to a Lorentzian manifold  with a parallel null vector field. Similarly, every parallel null spinor on a Lorentzian manifold induces an imaginary generalised Killing spinor on a space-like hypersurface. Then, based on  the fact that a parallel  spinor field induces a parallel  vector field, we can apply the first result to prove: every real analytic Riemannian manifold carrying a real analytic, imaginary generalised Killing spinor can be extended to a Lorentzian manifold with a parallel null spinor. Finally, we give examples of geodesically complete Riemannian manifolds satisfying the constraint conditions.
\end{abstract}

\maketitle


\section{Background and  main results}

This paper is a contribution to the research programme of studying global and causal properties of Lorentzian manifolds with special holonomy. A Lorentzian manifold has {\em special holonomy} if  the connected component of its holonomy group is reduced from the full group $\mathbf{SO}^0(1,n)$, but still acts indecomposably, i.e., without non-degenerate invariant subspaces. In this situation the Lorentzian manifold admits a  bundle of tangent null lines that is invariant  under parallel transport. The possible special Lorentzian holonomy groups were classified in \cite{bb-ike93} and \cite{leistnerjdg}, all of them can be realised by local metrics \cite{galaev05}, but many questions about the consequences of special holonomy for global and causal properties of the manifold are still open.  A special case of this situation is when the parallel null line bundle is spanned by a {\em parallel null vector field}. This is the case we will study in this paper. It  is motivated by the question which Lorentzian manifolds admit a {\em parallel spinor field}, which in turn draws its motivation from mathematical physics.
Since a parallel spinor is invariant under the spin representation of the holonomy group, indecomposable Lorentzian manifolds with parallel spinors have special holonomy.
However, since $\mathbf{SO}^0(1,n)$ has no proper {\em irreducible} subgroups, the situation is very different from the Riemannian case, where we have several  {\em irreducible} holonomy groups that admit an invariant spinor. In fact, a  spinor field $\phi$ on any Lorentzian manifold $(\bM,\bg)$ induces a causal vector field $V_\phi$, its {\em Dirac current}, which is defined by
\[\bg(X,V_\phi)=-\langle X\cdot \phi,\phi\rangle,\]
for all $X\in T\bM$. 
If the spinor $\phi$ is parallel, $V_\phi$ is a parallel vector field and thus reduces the holonomy to its stabiliser. Moreover, if we assume that the manifold is indecomposable, $V_\phi$  must be  {\em null}, by which we mean $\bg(V_\phi,V_\phi)=0$ and $V_\phi\not=0$.

A fundamental problem in the programme mentioned at the beginning is the question: What is the intersection of the class of globally
hyperbolic Lorentzian manifolds with the class of Lorentzian manifolds with special holonomy?  A Lorentzian manifold is {\em globally hyperbolic} if it admits a Cauchy hypersurface, i.e., a space-like hypersurface that is met by every inextendible timelike curve exactly once. 
It is known from work of Bernal and S\'{a}nchez 
\cite{bernal-sanchez03}
that a globally hyperbolic Lorentzian manifold $(\bM,\bg)$ is of the form $\bM=\rr\times \M$ with the metric
\belabel{globhypmetric}
\bg=-\lambda dt^2+\g_t,\eeqn
where  $\g_t$ is a $t$-dependent family of Riemannian
metrics on $\M$ and $\lambda=\lambda(t,x) $ is a smooth function on
$\bM$, the so-called {\em lapse function}.
Hence, the first step in order to understand globally hyperbolic manifolds with special holonomy, and more specifically, with parallel null vector field or parallel null spinor field (for which $V_\phi$ is null) is to  investigate the following questions:
\begin{enumerate}[(A)]
\item
\label{cc}
{\em Constraint conditions:} What are the  {\em constraint conditions} that are imposed on the space-like hypersurace $\M$ in the manifold  $(\bM,\bg)$ by the existence of  
{\em (i) parallel null vector field}, or {\em (ii)
a  parallel null spinor field}?
\item\label{cp} {\em Cauchy problem:}
Can we extend a  given a Riemannian manifold satisfying these constraint conditions to a Lorentzian manifold with metric  as in \eqref{globhypmetric} with a
{\em (i) parallel null vector field}, or {\em (ii)
a  parallel null spinor field}?
\end{enumerate}
By evaluating the Gau\ss-Codazzi equations we find the answer to question \ref{cc}(i):
If $(\bM,\bg)$ admits a parallel null vector field $V$ then  there is a vector field $U=-\pr_{T\M}(V)$ such that
\be\label{constraint-intro}
\nabla^\g U+u\W&=&0,
\ee
with $u^2=\g(U,U)$ and in which $\W:=-\bnab T$ is the Weingarten operator of $\M\subset (\bM,\bg)$.
Hence question \ref{cp}(i) can be stated more precisely as: 
 can a  Riemannian manifold $(\M,\g)$ that is given together with  a vector field $U$ and a $\g$-symmetric endomorphism field $\W$  satisfying the constraint equation \eqref{constraint-intro} be extended to  Lorentzian manifold $(\bM,\bg)$ as in \eqref{globhypmetric} with a parallel null vector field $V$ that projects to $U$.  In Sections  \ref{section-par-vf} and \ref{analytic} we will derive  a PDE system in the form of {\em evolution equations} and which
 is equivalent to the existence of a parallel null vector field $V$ for the metric in \eqref{globhypmetric}.
It is of the form
 \belabel{pdesys}
 \del^2_t\cal V =F( \cal V, \del_i\cal V,\del_t\cal V, \del_i\del_j\cal V, \del_i\del_t\cal V)
 \eeqn
 with $\cal V(t,x^i)=(\g(t,x^i), U(t,x^i), u(t,x^i)) $ a triple of symmetric bilinear forms, vectors and functions depending on $t$ and $x^i$ (see Theorems \ref{Theorem-parallel-vf} and 
\ref{Theorem-parallel-vf2} for a detailed statement).
 Thus we can apply
 the Cauchy-Kowalevski Theorem to \eqref{pdesys}, provided that the initial data and the lapse function  are analytic, and obtain:
 \begin{mthm}\label{maintheo1}
 Let
$(\M,\g, \W, U)$ be an analytic Riemannian manifold together with
a field of $\g$-symmetric, analytic endomorphisms $\W$,  and an analytic
vector field $U$ 
satisfying the  constraint equation
\eqref{constraint-intro}. Then, for any analytic
 function  $\lambda$ on $\RR\times \M$ there exists an open neighbourhood $\overline{\cU}$ of $\{0\}\times \M$ in $
 \RR \times \M$  and an unique analytic Lorentzian metric
\[ \bg=-\lambda^2dt^2 +\g_t\]
on $\overline{\cU}$ which admits an analytic, 
 parallel  null vector field  $\;V=\frac{u_t}{\lambda} \del_t-U_t$, with analytic  $t$-dependent families of Riemannian metrics
 $\g_t$,  vector fields $U_t$ and functions $u_t$  on $\M$
  satisfying the initial conditions
 $\g_0=\g$, $U_0=U$, $u_0=u$, and
\beq
\dot \g_0&=& -2\lambda_0 \II,
\\
\dot U_0&=& u \ \grad^\g(\lambda_0)   +\lambda_0\W(U),
\\
\dot u_0&=&d\lambda_0(U).
\eeq
\end{mthm}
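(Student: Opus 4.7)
The plan is to apply the Cauchy--Kowalevski theorem to the second-order evolution system \eqref{pdesys} derived in Sections~\ref{section-par-vf} and \ref{analytic}, and then to verify by a propagation-of-constraints argument that the resulting triple assembles into a parallel null vector field for $\bg=-\lambda^2 dt^2+\g_t$. First, identify the unknowns as the analytic triple ${\cal V}=(\g_t,U_t,u_t)$ on $\RR\times \M$. The initial layer ${\cal V}|_{t=0}=(\g,U,u)$ is given; the first-derivative layer $\del_t{\cal V}|_{t=0}$ is prescribed as in the statement. The formula $\dot\g_0=-2\lambda_0\II$ is forced by the Gau{\ss} identification $\W=-\bnab T$ with $T=\lambda^{-1}\del_t$, so that $\W$ becomes the Weingarten operator of $\M\subset(\bM,\bg)$ at $t=0$; the expressions for $\dot U_0$ and $\dot u_0$ arise by splitting the equation $\bnab V=0$ along $T\M$ and along $\del_t$ at $t=0$, using the constraint equation \eqref{constraint-intro} and the identity $u^2=\g(U,U)$.

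Second, apply the Cauchy--Kowalevski theorem to \eqref{pdesys}. Since $\lambda$ and the initial data $(\g,\W,U,u)$ are real analytic and the right-hand side $F$ is analytic in its arguments, there exists an open neighbourhood $\overline{\cU}$ of $\{0\}\times\M$ on which a unique analytic solution ${\cal V}=(\g_t,U_t,u_t)$ exists with the prescribed initial data. Shrinking $\overline{\cU}$ if necessary, $\g_t$ remains a Riemannian metric by continuity, so that $\bg=-\lambda^2dt^2+\g_t$ is a well-defined Lorentzian metric.

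Third, and this is the main step, show that $V:=(u_t/\lambda)\del_t-U_t$ is $\bnab$-parallel and $\bg$-null. The evolution system \eqref{pdesys} is obtained from $\bnab V=0$ by retaining only those components which can be cast in the second-order evolutionary form $\del_t^2{\cal V}=F(\ldots)$; the remaining first-order components together with the nullity $\bg(V,V)=u_t^2-\g_t(U_t,U_t)$ act as constraints. Introduce the auxiliary fields $\Phi:=\bnab V$ and $\psi:=\bg(V,V)$; by construction of the initial layers and by the hypothesis $\nabla^\g U+u\W=0$, one has $\Phi|_{t=0}=0$ and $\psi|_{t=0}=0$. Using the evolution equations, the torsion-freeness of $\bnab$, and the Ricci identity $\bnab^2V-\bnab^2V=\bR(\cdot,\cdot)V$, one derives a linear homogeneous analytic system of ordinary differential equations in $t$ (with $x\in\M$ as parameter) for the components of $\Phi$ and $\psi$. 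Uniqueness for this linear ODE system forces $\Phi\equiv 0$ and $\psi\equiv 0$ on $\overline{\cU}$, so $V$ is the desired parallel null vector field. Uniqueness of the Lorentzian extension follows directly from the uniqueness part of Cauchy--Kowalevski applied to \eqref{pdesys}.

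The main obstacle lies in the constraint-propagation step: one has to identify a clean splitting of the tensorial equation $\bnab V=0$ into evolutionary and constraint components, and then derive a closed linear ODE for the constraint components without introducing spatial derivatives outside the evolved quantities. The difficulty is amplified by the fact that $V$ is transversal but not tangent to the slices $\{t\}\times\M$, so the decomposition couples to the lapse $\lambda$ and to $\grad^\g\lambda_0$; these terms must be absorbed consistently using the evolution equations rather than used as additional constraints. Once this algebraic bookkeeping is carried out, the rest of the argument is routine.
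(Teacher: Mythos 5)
There is a genuine gap at the point where you pass from the Cauchy--Kowalevski solution of \eqref{pdesys} to the assertion that ``$\g_t$ remains a Riemannian metric by continuity''. Continuity only takes care of positive definiteness; the real problem is \emph{symmetry}. The evolution equation for $\g_t$ that comes directly out of $\bnab V=0$ (more precisely, out of the component $\bR(\del_t,X)V=0$, equation \eqref{Vevolvg1sys}) contains the term $\frac{\lambda^2}{u_t}\,d^{\nabla^t}\big(\frac{\dot\g_t}{\lambda}\big)(U_t,Y,X)$, which is \emph{not} symmetric in $(X,Y)$ a priori (Remark \ref{symrem}). Hence the right-hand side is not an operator on the bundle of symmetric bilinear forms, Cauchy--Kowalevski cannot be applied on that bundle, and if applied on the bundle of all bilinear forms it need not return a symmetric $\g_t$ --- i.e., no Lorentzian metric at all. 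The paper's resolution (Section \ref{analytic}) is to weaken the curvature condition to $\bR(X,V,V,Y)=0$ (Lemma \ref{curvlemma2}), which after symmetrisation produces the manifestly symmetric evolution equation \eqref{Vevolvg1sys2}, and it is to \emph{that} system that Cauchy--Kowalevski is applied.

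This weakening then undermines your constraint-propagation step as described. If the full condition $\bR(\del_t,X)V=0$ were available, propagation of $\bnab_X V=0$ really would be an ODE along the $t$-lines, since $\bnab_{\del_t}\bnab_XV=\bR(\del_t,X)V=0$ for $[\del_t,X]=0$ (Lemma \ref{curvlemma}); but that is exactly the condition one has to give up to obtain symmetry. With only $\bR(X,V,V,Y)=0$ available, the quantities $A=\bnab_{\cdot}V$, $B=\bR(T,\cdot)V$, $C=\bR(\cdot,\cdot)V$ satisfy the coupled linear system \eqref{nabla-eq} whose right-hand side contains first-order \emph{spatial} derivatives $d^{\bnab}\beta$; this is a linear PDE, not an ODE with $x$ as a parameter, and one needs Cauchy--Kowalevski (hence analyticity) a second time to conclude $A\equiv 0$. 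Your hoped-for ``closed linear ODE for the constraint components without introducing spatial derivatives'' is precisely what cannot be achieved here. The remaining ingredients of your sketch --- the derivation of the initial layer, the nullity $\bg(V,V)\equiv 0$ propagating from $t=0$ once $V$ is parallel, and uniqueness via Cauchy--Kowalevski --- are consistent with the paper.
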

As an illustration, in Proposition \ref{Prop-example} we provide an example  in which $\W$ is a Codazzi tensor on $(\M,\g)$ and for which we can explicitly solve the corresponding system \eqref{pdesys} for a constant function $\lambda$.

In Section \ref{section-lightlike-spinor} we will study to the problem of finding  parallel spinors 
on the Lorentzian manifold in \eqref{globhypmetric}. For Riemannian manifolds, the corresponding Cauchy  problem was studied by Ammann, Moroianu and Moroianu  \cite{AmmannMoroianuMoroianu13} in relation to the Cauchy problem for Ricci-flat manifolds. But, in contrast to the Riemannian situation,  Lorentzian manifolds with parallel spinors are not necessarily Ricci-flat. Hence, for Lorentzian manifolds the Cauchy problem \ref{cp}(ii) for parallel null spinors  in general is {\em not} a special case  of the Cauchy problem for Lorentzian  Ricci-flat metrics   (which we review briefly in Section~\ref{section-curv}). However, since a parallel spinor $\phi$ on a Lorentzian manifold induces a parallel {\em Dirac current} $V_\phi$, in the case when $V_\phi$ is null, we can apply Theorem \ref{maintheo1} instead.
First we answer  question \ref{cc}(ii):  the parallel spinor $\phi$ induces a spinor field $\vf$ on the Cauchy hypersurface  $\M$, which satisfies the following {\em constraint conditions} 
\belabel{spinor-intro}\begin{array}{rcl}
 \nabla^S_X
\varphi &=& \tfrac{i}{2}\, \W(X)\cdot \varphi, \qquad\forall X\in T\M,
\\[1mm]
 U_{\varphi} \cdot \varphi &=& i\,u_{\varphi} \,\varphi, 
\end{array} \eeqn
in which $U_\vf$ is the {\em ``Riemannian'' 
Dirac current} of $\vf$ defined by
$\g(U_\vf,X)=-i( X\cdot \vf,\vf)$, $u_{\varphi}=
\sqrt{\g(U_{\varphi},U_{\varphi})} = \|\varphi\|^2$, and $\W$ is the Weingarten operator of $\M\subset (\bM,\bg)$. A spinor satisfying equations \eqref{spinor-intro} with a symmetric endomorphism field $\W$ is called {\em  generalised imaginary Killing spinor} or, more precisely, {\em imaginary $\W$-Killing spinor}.
We should mention that the case of {\em generalised real Killing spinors}, which correspond to parallel spinors for metrics of the form $\bg=dr^2+\g_r$, was studied by B\"ar, Gauduchon and Moroianu \cite{baer-gauduchon-moroianu05}.

In order to  answer  question \ref{cp}(ii) by applying  Theorem \ref{maintheo1}, one checks that the data $(W,U_\vf)$ associated to an imaginary $\W$-Killing spinor  on $(\M,\g)$ satisfy the constraint conditions \eqref{constraint-intro} for a parallel vector field. Then we can apply 
Theorem \ref{maintheo1}  and obtain:
\begin{mthm}\label{maintheo2}
Let $(\M,\g)$ be an analytic Riemannian spin manifold with an
analytic $\g$-symmetric endomorphism field $\W$ and $\varphi$ an
imaginary $\W$-Killing spinor on $(\M,\g)$.
Then $\big(\M,\g,\W,U_\vf\big)$ satisfies the constraint conditions \eqref{constraint-intro} and on the Lorentzian manifold 
$\left(\overline{\cU}, \bg= -\lambda^2 dt^2 +
\g_t\right)$
obtained in Theorem~\ref{maintheo1}, and 
 with parallel null vector field $V$, there  exists a parallel null spinor field $\phi$ with Dirac current $V$. The parallel spinor $\phi$  is obtained by parallel transport of $\vf$ along the lines $t\mapsto (t,x)$.
\end{mthm}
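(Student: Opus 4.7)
The plan is: verify the constraint \eqref{constraint-intro} for $(\g,\W,U_\vf)$, apply Theorem~\ref{maintheo1} to extend the data to a Lorentzian manifold with parallel null vector field $V$, propagate $\vf$ by $\bg$-spin parallel transport along $\partial_t$ to obtain a candidate spinor $\phi$, and then establish parallelism of $\phi$ as well as the identity $V_\phi=V$ by a uniqueness-of-evolution argument.

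For the constraint, I would differentiate the defining identity $\g(U_\vf,X)=-i(X\cdot\vf,\vf)$ in a direction $Y\in T\M$ and substitute the imaginary $\W$-Killing equation $\nabla^{S\g}_Y\vf=\tfrac{i}{2}\W(Y)\cdot\vf$. Using skew-Hermiticity of Clifford multiplication by real vectors, the two spinor-derivative terms combine into $-\tfrac{1}{2}((X\cdot\W(Y)+\W(Y)\cdot X)\cdot\vf,\vf)$, and the Clifford relation $X\cdot\W(Y)+\W(Y)\cdot X=-2\g(X,\W(Y))$ together with $(\vf,\vf)=u_\vf$ yields $\g(\nabla^\g_Y U_\vf,X)=-u_\vf\,\g(\W(Y),X)$, i.e.\ \eqref{constraint-intro}. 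Hence Theorem~\ref{maintheo1} applies and delivers $(\overline{\cU},\bg=-\lambda^2dt^2+\g_t)$ together with the parallel null vector field $V=(u_t/\lambda)\partial_t-U_t$. Identifying the restriction of the ambient spinor bundle to $\M_0=\{0\}\times\M$ with $S\M$ in the standard way, I define $\phi$ by $\phi|_{\M_0}=\vf$ and $\nabla^{S\bg}_{\partial_t}\phi=0$.

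It remains to show that $\Psi(X):=\nabla^{S\bg}_X\phi$ vanishes for $X$ tangent to the slices $\M_t$ (for $X=\partial_t$ it does by construction). At $t=0$ this follows from the spinorial Gauss formula for a spacelike hypersurface and the correspondence between Clifford multiplication by the unit timelike normal $T=\partial_t/\lambda_0$ and the imaginary unit in the intrinsic spinor calculus, which together convert the imaginary $\W$-Killing equation into $\Psi(X)|_{\M_0}=0$. For the propagation, take $X=\partial_i$ in coordinates $(t,x^i)$ so that $[\partial_t,\partial_i]=0$ and obtain
\[
\nabla^{S\bg}_{\partial_t}\Psi(\partial_i)=R^{S\bg}(\partial_t,\partial_i)\phi.
\]
The main technical step, and the step I expect to be the principal obstacle, is to rewrite the right-hand side as a linear algebraic expression in $\Psi$ itself: this is where the hypothesis that $V$ is parallel enters, since then $R^{S\bg}$ takes values in the stabiliser of $V$ in $\fso(1,n)$, and Clifford manipulations involving $V\cdot\phi$ allow $R^{S\bg}(\partial_t,\partial_i)\phi$ to be expressed in terms of $\Psi$. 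Once this linearisation is in place, $\Psi$ satisfies a homogeneous linear ODE along each integral line of $\partial_t$ with vanishing initial data on $\M_0$, hence $\Psi\equiv 0$ and $\phi$ is parallel. Finally, $V_\phi$ is then parallel and by construction agrees with $V$ on $\M_0$ (the spatial part matches $U_\vf=U_0$ and the time-component matches $u_\vf=\|\vf\|^2=u_0$), so $V_\phi=V$ throughout $\overline{\cU}$ and $\phi$ is a parallel null spinor with the stated Dirac current.
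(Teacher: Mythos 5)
Your overall skeleton matches the paper's: verify the constraint for $(\g,\W,U_\vf)$, invoke Theorem~\ref{maintheo1}, define $\phi$ by parallel transport along the $t$-lines, and then propagate the vanishing of $A(X):=\nabla^{\bS}_X\phi$ off the initial slice. Your constraint verification is correct and in fact slightly more direct than the paper's (which first derives $X(u_\vf)=-\g(\W(X),U_\vf)$ and then differentiates the algebraic condition $U_\vf\cdot\vf=iu_\vf\vf$, concluding from $(\nabla_XU_\vf+u_\vf\W(X))\cdot\vf=0$); your direct differentiation of the Dirac-current identity using only the differential Killing equation is fine. Your closing argument identifying $V_\phi$ with $V$ (two parallel vector fields agreeing on $\{0\}\times\M$) is also legitimate and simpler than the paper's module-theoretic lemma, \emph{provided} parallelism of $\phi$ has been established.

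The genuine gap is exactly the step you flag as "the principal obstacle", and the mechanism you hope for does not exist. The quantity $R^{\bS}(\partial_t,\partial_i)\phi$ is \emph{not} expressible as a pointwise linear function of $\Psi$ alone: the fact that $\bR(\cdot,\cdot)$ annihilates $V$ constrains which $2$-forms can occur, but it does not determine $\bR(\partial_t,\partial_i)\cdot\phi$ in terms of $\nabla^{\bS}\phi$ at a point. Consequently there is no homogeneous linear ODE for $\Psi$ along the $t$-lines, and indeed if there were, your argument would prove the theorem in the smooth category — whereas the paper emphasises that the method requires analyticity, and that in the Riemannian analogue smooth generalised Killing spinors in general cannot be extended (Bryant; Ammann--Moroianu--Moroianu). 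The paper's resolution is to \emph{prolong} the system: introduce the additional unknown $B(X,Y):=R^{\bS}(X,Y)\phi$, use the identity $\bR(T,X,\cdot,\cdot)=\bR(N,X,\cdot,\cdot)$ (a consequence of $\bnab V=0$ with $V=u(T-N)$) together with the second Bianchi identity to convert the $T$-derivative of the curvature into \emph{spatial} first derivatives of $B$ plus terms linear in $A$ and $B$, and thereby obtain a coupled first-order linear PDE system
$\bnab_{\partial_t}\binom{A}{B}=Q\binom{A}{B}$ with $Q$ involving $d^{\bnab}B$. Uniqueness for this system with zero initial data is then supplied by the Cauchy--Kowalevski theorem, not by ODE theory. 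Without this prolongation (or an equivalent closure of the system), your propagation step does not go through.
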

Finally, in Section \ref{examples} we give examples of {\em complete} Riemannian metrics satisfying the constraint equations \eqref{constraint-intro}, including a metric on the $2$-torus.

Our results in Theorems \ref{maintheo1} and \ref{maintheo2} are just the beginning of studying global hyperbolicity for manifolds with special holonomy and they suggest further questions:
\begin{itemize}
\item Can Theorems \ref{maintheo1} and \ref{maintheo2} be generalised to the smooth setting?
\item Under which conditions do exist long term solutions to the Cauchy problems \eqref{cp} that give a Lorentzian metrics on $\bM=\rr\times \M$?
\item Provided there is a long term solution on $\bM=\rr\times \M$,  under which conditions is the resulting Lorentzian manifold   globally hyperbolic?
\item Is there a classification of Riemannian manifolds saisfying the constraint conditions \eqref{constraint-intro} and \eqref{spinor-intro}?
\end{itemize}
Having the analogous situation for the Lorentzian Einstein equation in mind, for which the work of Choquet-Bruhat \cite{Foures-Bruhat52} settled  the problem in smooth case, it is very likely that the answer to the first question is positive. However, answers to these questions require  techniques that are beyond the scope of this paper and have to be postponed to future research.

\section{Metrics of the form $\bg = - \lambda^2 dt^2 +
\g_t$}\label{section-curv}

In the following, we consider product manifolds of the form $\bM:=\rr\times \M$,
where $\M$ is a smooth manifold of dimension $n$, with Lorentzian
metrics 
\be \label{Lmetric} \bg= -\lambda^2
dt^2 +\mathbf{g}_t, \ee
 where $\g_t$ is a $t$-dependent family of Riemannian
metrics on $\M$ and $\lambda=\lambda(t,x) $ is a smooth function on
$\bM$, the so-called {\em lapse function}.
For a metric of the form~\eqref{Lmetric}  we fix a time-like
unit vector field
\[T:=\lambda^{-1}\del_t.\]
In the following, by a bar we denote geometric objects defined by $\bg$ such as the
Levi-Civita connection $\bnab$. Objects without bar come from
$\g_t$ and do depend on the parameter $t\in\RR$. We will
indicate this with an (upper or lower) index $t$. Vector fields $U$ on
$\bM$ that are orthogonal to $T$ (or, equivalently, tangent to $\M$) can be considered in two ways: 
as sections of the bundle $\T^\perp \to \bM$, i.e., $U\in\Gamma(T^\perp)$,  or as $t$-dependent sections of the bundle $T\M\to\M$, i.e., $U_t\in \Gamma (T\M)$ for all $t$. Similarly, we will treat sections of tensor bundles of $T^\perp\to \bM$. Finally, when useful, we write functions on $\bM$ as $t$-dependent families of functions on $\M$, i.e., $u_t=u(t,.) $. Vector fields
(or functions) on $\M$ and their lifts to $\bM$ are denoted by the
same symbol.

The gradient of the lapse function is related to the derivative of $T$ as follows,
 \[ \bnab_T T = \grad^t(\log\lambda), \qquad \bnab_{\partial_t} T =
\grad^t(\lambda), \] where $\grad^t$ denotes the gradient with
respect to the metric $\g_t$. For $X,Y\in T\M$ denote by
\[
\mathrm{II}_t(X,Y):=-\bg( \bnab_XT,Y)
\]
the second fundamental form of $(\M,\g_t)\subset (\bM,\bg)$, i.e.,
we have
\[
\bnab_XY=\nabla^t_XY-\II_t(X,Y) T
\]
in which $\nabla^t$ denotes the Levi-Civita connection of $\g_t$.
The dual of the  second fundamental form is the {\em Weingarten operator} defined by
\[\II_t(X,Y)=\g_t(\W_t(X),Y),
\]
i.e., $\W_t=-\bnab T|_{T\M}$. The second fundamental form is
computed in terms of $\g_t$ as
\[
\II_t(X,Y)=-\tfrac{1}{2}\lambda^{-1} (\cal L_{\del_t}\g_t)(X,Y),
\]
where $\cal L$ denotes the Lie derivative. Hence, extending  $X$ and
$Y$ independent of $t$ we have
\be \label{2ff} \II_t(X,Y) &=& \!\!\! -\tfrac{1}{2}\lambda^{-1}
\del_t\g_t(X,Y)=:
-\tfrac{1}{2}\lambda^{-1} \dot{\g}_t(X,Y)\\
\dot{\II}_t(X,Y) &=&  \tfrac{1}{2}\lambda^{-1}\big(
\,\dot{(\log\lambda)} \dot{\g}_t(X,Y) - \ddot{\g}_t(X,Y)\,\big),
\label{2ff-1} \ee
where the dot denotes the partial $t$ derivative.
Moreover, 
for a symmetric $(2,0)$-tensor field $h$ and a one form $\mu$ we use
the notation
\begin{eqnarray*}  d^{\nabla}h(X,Y,Z)&:=&
(\nabla_Xh)(Y,Z)-(\nabla_Yh)(X,Z),\\
\mu\wedge h(X,Y,Z) &:=& \mu(X)h(Y,Z)-\mu(Y)h(X,Z).\end{eqnarray*}
The skew symmetric derivative $d^{\nabla}h$ satisfies the first
Bianchi identity
\begin{eqnarray}\label{Bianchi-dnabla}
d^{\nabla}h(X,Y,Z) + d^{\nabla}h(Y,Z,X) + d^{\nabla}h(Z,X,Y) = 0.
\end{eqnarray}
The trace of $d^{\nabla}h(X, \cdot, \cdot)$ is given by the
divergence and the trace of $h$, via
\begin{eqnarray}\label{Tr-dnabla}
\tr_{\g_t} d^{\nabla^t}h(X, \cdot, \cdot) = \div^t h \,(X) + d\,
\tr_{\g_t}h\, (X).
\end{eqnarray}
 The
curvature $\bR$ of $\bg$ defined as $\bR(U,V):= \left[
\bnab_X,\bnab_Y\right]-\bnab_{[U,V]}$
 is linked to the curvature $\R^t$ of $\g_t$ by the
 {\em Gau\ss\ equation}
 \be\label{gauss}
 \bR (X,Y,Z,U) \  = \ 
 \R_t (X,Y,Z,U)
 -\II_t(X,Z)\II_t(Y,U)+ \II_t(X,U)\II_t(Y,Z),
 \ee
with $X,Y,Z,U\in T\M$,
 the {\em Codazzi equation}
 \belabel{codazzi}
 \begin{array}{rcl}
 \bR (X,Y,Z,T) &=&
 d^{\nabla^t}\II_t(X,Y,Z)
 \\
 &=&
 -\frac{1}{2\lambda}
  \left(
 d^{\nabla^t}\dot\g_t(X,Y,Z)-(d\log \lambda)\wedge \dot\g_t)(X,Y,Z)\right),
\end{array}
\eeqn
 and the following formula, sometimes called {\em Mainardi
equation}, \be \label{mainardi} \bR(X,T,T,Y)& =&
\II_t(X,\W_t(Y))+\frac{1}{\lambda}\left(\dot\II_t(X,Y)+\Hess^t(\lambda)(X,Y)\right),
\ee
where $\Hess^t(f)=\nabla^t df$ denotes the Hessian of a function with respect to the metric $\g_t$,
and $\W_t$ is the Weingarten operator. Indeed,
since  $\bnab_TT=\grad^t(\log\lambda)$, we have
\begin{eqnarray*}
\bR(X,T,T,Y)
&=&
\Hess^t(\log\lambda)(X,Y)-\left( T(\bg(\bnab_XT,Y))- \bg(\bnab_{[T,X]}T,Y)\right) +
\bg(\bnab_XT,\bnab_T Y)
\\
&=&
\Hess^t(\log\lambda)(X,Y)-\cal L_T(\bg(\bnab T,.))(X,Y)+\bg(\bnab_XT,\bnab_YT ),
\end{eqnarray*}
which proves \eqref{mainardi}, when taking into account that
\beq
\cal L_T(\bg(\bnab T,.))(X,Y) &=&
-\lambda^{-1}\cal L_{\del_t}\II_t(X,Y)+X(\lambda^{-1})Y(\lambda)
\\
&=&
-\frac{\dot \II_t(X,Y)}{\lambda}-X(\log \lambda)Y(\log \lambda).
\eeq
and that
\[\Hess^t(\log\lambda)+d(\log\lambda)^2\ =\ \frac{1}{\lambda}\Hess^t(\lambda).\]
For the Ricci curvature
\[\bRic\ =\ -\bR(T,\cdot,\cdot,T)+\sum_{i=1}^n\bR(E_i,\cdot,\cdot,E_i)\]
equation (\ref{Tr-dnabla}) gives 
\begin{eqnarray}
\label{rictt}
\bRic(T,T) &=& \|
\II_t\|_{\g_t}^2
+\frac{1}{\lambda}\left( \tr^t(\dot \II_t)+\Delta_t(\lambda) \right)
\\
\label{rictx} 
\bRic(X,T)&=&d(\tr^t\II_t)(X)+\div^t
\II_t(X)
\ee
and
\belabel{ricxy}
\begin{array}{rcl}
\bRic(X,Y)&=&\Ric^t(X,Y) + \tr^t(\II_t)\II_t(X,Y) -
2\II_t(X,\W_t(Y))
\\
&&-\frac{1}{\lambda}\left( \dot\II_t(X,Y)
 +\Hess^t(\lambda)(X,Y)\right),
\end{array}\eeqn
in which $\tr^t$ is the trace, $\div^t$ the divergence and
$\Delta_t=\tr^t(\Hess^t)$ the Laplacian, all with respect to $\g_t$.
$\| \II \|^2_{\g_t}$ is the norm with respect to $\g_t$, which is
equal to $\tr^t(\W_t^2)$. Finally, for the scalar curvature we get
\beqn\label{scalar} \bscal = \scal^t +(\tr^t( \II_t))^2 -
3\|\II_t\|_{\g_t}^2 -\frac{2}{\lambda}\left( \tr^t(\dot
\II_t)+\Delta_t(\lambda) \right). \eeqn
These formulae give us the well known constraint and evolution
equations for Ricci flat Lorentzian metrics (see 
\cite{BartnikIsenberg04} for a review). In fact, the Lorentzian metric
\eqref{Lmetric} is Ricci flat if and only if, the Riemannian metrics
$\g_t$ together with the symmetric bilinear form $\II_t$ satisfy the
{\em constraint equations} 
\belabel{constr1}
\begin{array}{rcl} \scal^t
&=&\|\II_t\|_{\g_t}^2- \tr^t( \II_t)^2
\\
d\,\tr^t\II_t&=&-\div^t \II_t, \end{array}
\eeqn 
which follow from
setting \eqref{rictt}, \eqref{scalar} and \eqref{rictx} to zero, and
the {\em evolutions equation} for $\II_t$, \beqn \label{evol}
\dot\II_t(X,Y)= \lambda\Big( \Ric^t(X,Y) +\tr^t(\II_t) \II_t(X,Y) -
2 \II_t(X,\W_t(Y))\Big) -\Hess^t(\lambda)(X,Y), \eeqn which comes
from equation \eqref{ricxy} and in which the dot denotes the $t$-derivative. Rewriting this equation in terms of
$\g_t$ using \eqref{2ff}, it becomes an evolution equation for
$\g_t$, namely 
\beqn \label{evolg} 
\begin{array}{rcl}
\ddot{\g}_t(X,Y)&=&
\left(\dot{(\log\lambda)}-\frac{\tr^t(\dot
\g_t)}{2}\right)\dot{\g}_t(X,Y)-2\lambda \dot\g_t(X,\W_t(Y))
\\
&&{} +
2\lambda\Hess^t(\lambda)(X,Y)-2\lambda^2\Ric^t(X,Y).\end{array} \eeqn
On can check that the constraint equations \eqref{constr1} are preserved under this flow (see for example \cite[p. 438]{Koiso81}). For analytic data (initial conditions and $\lambda$)
one can apply the  Cauchy-Kowalevski Theorem (an excellent reference is
\cite{Folland95}) in order to obtain a unique analytic solution. That this can be done also for smooth data is the result of fundamental work by Y. Choquet-Bruhat in \cite{Foures-Bruhat52}.

\section{Constraint and evolution equations for parallel null vectors} \label{section-par-vf}

In this section we study the problem of extending a given Riemannian manifold to a Lorentzian manifold of the form \eqref{Lmetric} under the condition that the metric $\bg$  admits a  {\em parallel null vector field}. 

In the following
let $(\bM,\bg)$ be a time-oriented Lorentzian manifold of the form
\eqref{Lmetric} with time orientation  given by $T$ and let $V$ be a vector
 field
We denote the time component of $V$ by
\begin{eqnarray} \label{VTu}
u:=-\bg(T,V).\end{eqnarray}
If $V$ is null, i.e., $\g(V,V)=0$, then $u>0$. The vector fields 
 $T$ and $V$ define a global space-like
vector field $U$ on $\bM$ tangent to $\M$ by projecting $-V$ along
$T$ onto $\M$,
\beqn\label{VTU} U:=  uT-V,\ \quad \text{ i.e., } \quad V = uT-U,
\eeqn yielding $\bg(U,U)=u^2$.
On the other hand, if $U$ is a nowhere vanishing vector field on
$\bM$ which is tangent to $\M$ in any point, then $U$ is space-like,
$u:= \sqrt{\bg(U,U)}>0$ and $V := uT-U$ is null and future directed
with $u=-\bg(V,T)$.
Again we consider $U$ and $u$ as $t$-dependent families of vector fields $U_t=U(t,.)$ and functions $u_t=u(t,.)$ on $\M$.
Now we observe the following: 
\blem
\label{prlemma} Let $V$ be a
null vector field on $\bM$ and $\overline{X}\in T\bM$. Then
$\bnab_{\overline{X}}V=0$ if and only if
$\pr_{T^\perp}(\bnab_{\overline{X}}V)=0$. \elem \bprf Let $V=u T-U$ as
in \eqref{VTU}. Since $V$ is null we have
\[
0=\bg(\bnab_{\overline{X}}V,V)=u \bg(\bnab_{\overline{X}}V,T)- \bg(\bnab_{\overline{X}}V,U).
\]
Hence, under the assumptions that $\pr_{T^\perp}(\bnab_{\overline{X}}V)=0$ we have, in particular, that
$\bg(\bnab_{\overline{X}}V,U)=0$ which proves the claim.
\eprf
In the following, we will denote
 by $\dot{u}_t$ and $\dot{U}_t$  the
Lie derivatives \[
\dot{u}_t:= \del_t(u)\text{ and }\dot{U}_t :=
{\cal L}_{\partial_t} U_t = [\partial_t, U_t].\]
\bprop \label{nabVlemma} Let $V$ a future directed null vector field,
$U_t$ the corresponding space-like projection onto $\M$ as defined in
(\ref{VTU}) and $u_t = \sqrt{\bg(U_t,U_t)}=-\bg(T,V)$.
\begin{enumerate}
\item
Let $X \in T\M$. Then $\,\bnab_XV=0\,$ is equivalent to \be
\label{nabZ1} \nabla^t_XU_t&=&-u_t \W_t(X).\ee 
In particular, the condition
$\bnab_XV=0$ implies $du_t(X) = -\II_t(U_t,X)$.
\item $\bnab_{\del_t}V=0$ if and only if
\be\label{nabtZ} \bnab_{\del_t}U_t&=&\dot{u}_t \ T+u_t\ \grad^t
\lambda, \ee which is equivalent to \be\label{nabtZ1} \dot{U}_t &=&
 u_t \ \grad^t(\lambda)
+\lambda\W_t(U_t).\ee 
The condition $\,\bnab_{\del_t}V=0\,$ implies  $\, \dot{u}_t  =
d\lambda(U)$.
\end{enumerate}
\eprop
\bprf Using $V= u_t T-U$ and $\bnab_{\del_t}T=\grad^t(\lambda)$, we
obtain
\begin{eqnarray*} 0 \; = \;  \bnab_{\overline{X}} V &=& du(\overline{X}) T+u
\bnab_{\overline{X}} T -\bnab_{\overline{X}}U \\
&= & \left\{\begin{array}{ll} \big(du_t(X)+\II_t(X,U_t)\big) T-
u_t\W_t(X)-\nabla^t_XU_t,&\text{if $\overline{X}=X\in T\M$,}
\\[2mm]
\dot{u}_t T + u_t\ \grad^t(\lambda) -\bnab_{\del_t} U_t,&\text{if
$\overline{X}=\del_t$.}
\end{array}\right.
\end{eqnarray*}
Lemma \ref{prlemma} shows that the first equation is equivalent to
\eqref{nabZ1}. The second one is equivalent to \eqref{nabtZ}. Again,
applying Lemma \ref{prlemma} and
\begin{eqnarray} \label{nabtZa} \bnab_{\del_t}U=[\del_t,U] +\bnab_{U}\del_t = \dot{U}_t
+\bnab_{U}\lambda T = \dot{U}_t +d\lambda(U)T -\lambda
W_t(U_t)\end{eqnarray} we get the equivalence of \eqref{nabtZ} and
\eqref{nabtZ1}. With Lemma \ref{prlemma} equation
 \eqref{nabZ1} implies $\, du_t(X)=-\II_t(U_t,X)\,$ and \eqref{nabtZ} and
\eqref{nabtZa} imply $\,\dot{u}_t  = d\lambda(U).$ \eprf

Now we drop the assumption that $V$ is a {\em null} vector field for
a moment.
\blem\label{curvlemma} Let $V$ be a  vector field on $(\bM,\bg)$.
Then $V$ is parallel for $\bg$ if and only if
the following conditions are satisfied:
\be \bR (\del_t,X)V &=&0, \text{ for all $X\in T\M$}, \label{Rpde}
\\
\bnab_{\del_t}\bnab_{\del_t}V&=&0,
\label{Vpde}
\\
\bnab_{X}V|_{\{0\}\times \M} &=& 0, \text{ for all $X\in
T\M|_{\{0\}\times \M}$} \label{Vconstr}
\\
\bnab_{\del_t}V|_{\{0\}\times \M}&=&0. \label{Vic} \ee \elem
\bprf Clearly, if $V$ is parallel, all the conditions follow
immediately. Thus, let us assume the four conditions. Firstly, the
equation \eqref{Vpde} shows that the vector field $\bnab_{\del_t}V$
is parallel transported along the curves $t\mapsto(t,x)$. Hence,
because of the initial condition \eqref{Vic}, we get that
$\bnab_{\del_t}V=0$ everywhere. Using this, equation \eqref{Rpde}
gives that \beqn \label{crucial1} 0\ =\ \bR(\del_t,X)V\ =\
\bnab_{\del_t} \bnab_XV-\bnab_X\bnab_{\del_t} V-
\bnab_{[\del_t,X]}V\ =\ \bnab_{\del_t} \bnab_XV \eeqn whenever $X$
is the lift of a vector field of $\M$ to $\bM$, i.e., such that
$[\del_t,X]=0$. This shows that $\bnab_XV$ is parallel transported
along all $t\mapsto (t,x)$. Since we have assumed  that $\bnab_XV=0$
along the initial manifold $\M$, it also shows that $V$ is parallel
on $\bM$. \eprf

Now we will study the equations \eqref{Rpde} and \eqref{Vpde}
further.
\blem \label{curvVlemma} Let $(\bM,\bg)$ be a Lorentzian manifold
as in \eqref{Lmetric}.
\begin{enumerate}
\item There exists a vector field  $V\in \Gamma(T\bM)$
such that
\[\bR(X,Y)V=0\ \text{ for all $X,Y\in T\M$,}\]
 if and only if there is a smooth family of vector fields $\{U_t\}_{t\in \rr}$ and functions $\{u_t\}_{t\in \rr}$ on
 $\M$,
such that
\beqn \label{Vconstr2} \R^t(X,Y,Z,U_t)=u_t\,
d^{\nabla^t}\II_t(X,Y,Z)+\II_t(X,Z)\II_t(Y,U_t)-\II_t(Y,Z)\II_t(X,U_t),
\eeqn for all $X,Y,Z\in T\M$.
\item
There exists a vector field  $V\in \Gamma(T\bM)$ with
 \[
 \bR(T,X)V=0\ \text{ for all $X\in T\M$,}\]
  if and only if there is a smooth family of vector fields $\{U_t\}_{t\in \rr}$ and functions $\{u_t\}_{t\in \rr}$ on $\M$, such that
\beqn
\label{Vevolv2}
u_t \dot\II_t(X,Y) = \lambda ( d^{\nabla_t}\II_t)(U_t,Y,X)  -u_t \lambda \II_t(X,\W_t(Y)) -u_t\Hess^t(\lambda)(X,Y),
\eeqn
or equivalently,
\be
u_t\ddot \g_t(X,Y) &=&
{\lambda}^2  d^{\nabla^t}\Big(\frac{\dot \g_t}{\lambda}\Big)(U_t,Y,X)-
u_t\lambda \dot \g_t(X,\W_t(Y))
\nonumber
\\
&&{ }+
u_t\dot{(\log\lambda)}\dot{\g}_t(X,Y)
+2u_t \lambda\Hess^t(\lambda)(X,Y),
\label{Vevolvg}
\ee
for all $X,Y\in T\M$.
\end{enumerate}
\elem
\bprf We again use the relation between $V$ and $(U_t,u_t)$ given by
$V=u_tT-U_t$ with $u_t= - \bg(V,T)$, where $T=\lambda^{-1}\del_t $ is the
time like unit vector field.
 Then, for vectors $\overline{X}\in T\bM$ and $Y\in T\M$ we have
 \[\bR(\overline X,Y)V
 =
u_t\bR(  \overline X,Y)T-\bR(\overline X,Y)U_t.
\]
Hence, $\bR(\overline X,Y)V=0$ is equivalent to the equations
\be
\bR(\overline X,Y,U_t,T)&=&0\nonumber
\\
-u_t\bR(  \overline X,Y,Z,T)+\bR(\overline X,Y,Z,U_t)&=&0\label{rv}
\ee for all $Y,Z\in T\M$ and $\overline X\in T\bM$. Note that in
case $u_t$ has no zeros, the second equation implies the first as it
holds for all $Z\in T\M$ including $U_t$. Setting $\overline X=X\in
T\M$, the Gau\ss\ and Codazzi equations \eqref{gauss} and
\eqref{codazzi} show that \eqref{rv} is equivalent to
\eqref{Vconstr2}.

On the other hand for $\overline X=T$,
 the Codazzi and Mainardi equations \eqref{codazzi} and \eqref{mainardi} show that \eqref{rv} is equivalent to \eqref{Vevolv2}.
 Computing $\dot \II_t$ as
\[\dot \II_t(X,Y)= \frac{1}{2\lambda}\left(\dot{\log\lambda}\ \dot\g_t(X,Y)-\ddot{g}_t(X,Y)\right),\]
using \eqref{2ff}, shows that \eqref{Vevolv2},
 when written out in terms of $\dot\g_t$, just becomes \eqref{Vevolvg}.
 \eprf
 Next, we look at the second derivative of a null vector field in the
$t$-direction.
\blem\label{dtdtulemma} 
There exists a vector field $V$ on $\bM$
with
\[\bnab_{\del_t}\bnab_{\del_t}V=0,\]
if and only if there  exist a smooth family of vector fields
$U_t$ and functions $u_t$ on $\M$,
such that
\belabel{dtdtu1}\begin{array}{rcl}
 \ddot{U}_t &=& \lambda \left( \left[\del_t,\W_t(U_t)\right] +
\W_t(\dot{U}_t) -\lambda \W^2_t(U_t)\right) 
\\
&&{ }+ u_t\left( [\del_t,\grad^t\lambda ] -
\lambda\W_t(\grad^t\lambda)\right) + \dot{\lambda}_t \W_t(U)
+\left(2\dot u_t-d\lambda(U_t)\right) \grad^t\lambda
\end{array}
\eeqn
and
\be
\label{dtdtu2} \ddot{u}_t &=&\g_t( [\del_t,\grad^t\lambda ],U_t) +2
d\lambda (\dot{U}_t) -3\lambda d\lambda (\W_t(U_t))
-u_t\|\grad^t\lambda)\|^2_t. \ee
\elem
\bprf Note that $\bnab_{\del_t}T=\grad^t\lambda$ and
\[
\bnab_{\del_t}X=d\lambda(X)T+ \dot{X}_t  -\lambda W_t(X),\] for
$X\in \Gamma(T\M)$ but possibly depending on $t$. Using this, for
$V=u_t T-U_t$ we compute
%
\beq
\bnab_{\del_t}V&=& \dot u_t T+u_t\
\grad^t(\lambda) -\bnab_{\del_t} U_t \\
&=&  \big(\dot{u}_t-
d\lambda(U_t)\big)T + u_t \ \grad^t(\lambda) - \dot{U}_t  +\lambda
W_t(U_t).
\eeq
Applying $\bnab_{\del_t}$ to this,   we get
\beq \bnab_{\del_t}\bnab_{\del_t}V&=& \left( \ddot{u}_t
-\del_t(d\lambda(U_t)) +u_t \|\grad^t \lambda\|_t^2
-d\lambda(\dot{U}_t)+\lambda d\lambda (\W_t(U_t)) \right) T
\\
&& { }-\ddot{U}_t + \lambda \left( \left[\del_t,\W_t(U_t)\right] +
\W_t(\dot{U}_t) -\lambda \W^2_t(U_t)\right) \nonumber
\\
&&{ }+ u_t \left( [\del_t,\grad^t\lambda ] -
\lambda\W_t(\grad^t\lambda)\right) +\dot{\lambda}_t \W_t(U_t)
+\left(2\dot{u}_t - d\lambda(U_t) \right) \grad^t\lambda. \eeq
Setting $ \bnab_{\del_t}\bnab_{\del_t}V=0$ already implies
\eqref{dtdtu1}.
Computing
\beq \del_t(d\lambda(U_t)) &=&
\bg(\,\bnab_{\del_t}\overline{\grad}\lambda  , U\,) +
\bg(\,\overline{\grad}\lambda , \bnab_{\del_t}U \,) \\&=&
\g_t([\del_t,\grad^t\lambda],U_t) +d\lambda([\del_t,U_t]) -2\lambda
d\lambda (\W_t(U_t)), \eeq we obtain also \eqref{dtdtu2}. \eprf

For later purposes, we will now record the dualisation of  formula \eqref{dtdtu1}.

\blem\label{duallemma} Equation \eqref{dtdtu1} is equivalent to
\begin{equation}
\label{dtdtu1dual}
\begin{array}{rcl}
 \g_t(\ddot U_t,X) &=& -\frac{1}{2}\ddot \g_t(U_t,X)-\dot \g(\dot
U_t,X) -\frac{\lambda}{2}\dot \g_t(\W_t(U_t),X) 
\\&&{ }
+u_t\g_t( [\del_t,\grad^t\lambda],X)
+\frac{u_t}{2}\dot\g_t(\grad^t\lambda, X) +\left(2\dot
u_t-d\lambda(U_t)\right) d\lambda(X), \end{array}\eeqn
 for any
$X\in T\M$.\elem
\bprf We compute the first term in the right hand side of
\eqref{dtdtu1} as
\beq \lambda \g_t( \left[\del_t,\W_t(U_t)\right] ,X) &=& \lambda
\,\bg \big(\, \bnab_{\del_t} \W_t(U_t) -\bnab_{\W_t(U_t)}\del_t,X
\big)
\\ &=&
\lambda \,\del_t \left( \II_t(U_t,X)\right) -\lambda \,
\bg(\W_t(U_t),\bnab_{\del_t}X)+\lambda^2\II_t(\W_t(U_t),X)
\\&=&
-\frac{\lambda}{2}\del_t\left(\frac{1}{\lambda}
\dot\g_t(U_t,X)\right) +2\lambda^2\II_t(\W_t(U_t),X)
\\&=&
\frac{1}{2}\del_t(\log\lambda)\dot\g_t(U_t,X) -\frac{1}{2}
\left(\ddot\g_t(U_t,X)+\dot\g_t(\dot U_t,X) \right)
-\lambda\dot\g_t(\W_t(U_t),X). \eeq
Hence, all of \eqref{dtdtu1}
gives
\beq \g_t(\ddot U_t,X) &=&
\frac{1}{2}\del_t(\log\lambda)\dot\g_t(U_t,X) -\frac{1}{2}
\left(\ddot\g_t(U_t,X)+\dot\g_t(\dot U_t,X) \right)
-\lambda\dot\g_t(\W_t(U_t),X)
\\
&&
-\frac{1}{2}\dot\g_t(\dot U_t,X ) +\frac{\lambda}{2}\dot\g_t(\W_t(U_t)),X) -\frac{1}{2}\del_t(\log\lambda) \dot\g_t(U_t,X))
\\
&&{ }+ u_t\left(\g_t ([\del_t,\grad^t\lambda ] , X) +\frac{1}{2}\dot\g_t(\grad^t\lambda,X)\right)
 +\left(2\dot u_t-d\lambda(U_t)\right) d\lambda(X),
\eeq
which proves the Lemma.
\eprf
Using the previous Lemmas, we obtain
\btheo\label{Theorem-parallel-vf}
  Let
$(\M,\g, \W, U)$ be a Riemannian manifold together with a field of
$\g$-symmetric endomorphisms $\W$, with corresponding symmetric
bilinear form $\II:=\g(\W \cdot, \cdot)$, and a vector field $U$  satisfying the following constraint
equations \be\label{constraintU} \nabla^\g U+u\W&=&0,
\ee
where $u^2:=
\g(U,U)$.
Then, for any positive smooth function $\lambda$ on $\RR\times \M$,
a triple $(\g_t, U_t, u_t)$ of smooth one-parameter families of
Riemannian metrics, vector fields  and functions on $\M$ defines a
Lorentzian metric
\[
\bg=-\lambda^2dt^2 +\g_t
\]
on an open neighbourhood $\overline{\cU}(\{0\} \times \M) \subset
\rr\times \M$ with parallel null vector field
\[V=\frac{u_t}{\lambda} \del_t-U_t,\]
if and only if $\g_t$, $U_t$ and $u_t$ satisfy the following system
of PDEs on $\overline{\cU}$,
\be
 \ddot \g_t(X,Y)
 &=&\label{Vevolvg1sys}
 \frac{\lambda^2}{u_t}  d^{\nabla^t}\Big(\frac{\dot
\g_t}{\lambda}\Big)(U_t,Y,X)+\frac{1}{2}
 \dot \g_t(X,\dot\g_t^\sharp(Y))
+ \dot{(\log\lambda)}\dot{\g}_t(X,Y)
\\&&{}+
2\lambda\Hess^t(\lambda)(X,Y), 
\nonumber
\\
\g_t(\ddot U_t,X) 
\label{dtdtu1sys}
&=&
-\frac{\lambda^2}{2u_t}d^{\nabla^t}\Big(\frac{\dot
\g_t}{\lambda}\Big)(U_t,X,U_t) -
\dot\g_t(\dot U_t,X) -\frac{\dot{\log\lambda}}{2}\dot\g_t(U_t,X)
\\&&{} -\lambda\Hess^t(\lambda)(U_t,X)\nonumber
+u_t\g_t( [\del_t,\grad^t\lambda],X)
+\frac{u_t}{2}\dot\g_t(\grad^t\lambda, X) 
\\&&{ }+\left(2\dot
u_t-d\lambda(U_t)\right) d\lambda(X), \nonumber
\\[2mm]
\label{dtdtu2sys} \ddot{u}_t&=&\g_t( [\del_t,\grad^t\lambda ],U_t)
+2 d\lambda (\dot{U}_t) +\frac{3}{2} \dot\g_t (\grad^t(\lambda),
U_t) - u_t\|\grad^t\lambda\|^2_{\g_t}, \ee  with the initial
conditions 
\belabel{icdU0} 
\begin{array}{rcl} \g_0&=&\g,
\\
\dot \g_0&=& -2\lambda_0 \II,
\\
U_0&=&U,
\\
\dot U_0&=& u \ \grad^\g(\lambda_0)   +\lambda_0\W(U),
\\
u_0&=&u,
\\
\dot u_0&=&d\lambda_0(U). 
\end{array} \eeqn Here, $\dot\g_t^\sharp$
denotes the metric dual of $\dot\g_t$, i.e.,
$\g_t(X,\dot\g^\sharp_t(Y))=\dot\g_t(X,Y)$ and in terms of the
Weingarten operator $W_t$ of $(\M,\g_t)$ we have $\,\dot
\g_t(X,\dot\g_t^\sharp(Y)) = 4 \lambda^2 \g_t(W_t(X),W_t(Y)) =
-2\lambda \dot{\g}_t(\W_t(X),Y)$. \etheo
\bprf
Let $(\g_t,U_t,u_t)$ be a triple of one-parameter families of
Riemannian metrics, vector fields  and functions on $\M$.

First, assume that the Lorentzian metric $\bg$ defined by $\lambda$
and $\g_t$ admits a parallel null vector field $V$ defined by
$\lambda$, $U_t$ and $u_t$. Since $V$ is null, we have
$\g_t(U_t,U_t)=u_t^2>0$. Moreover, it implies $ \bR(T,X)V=0$ for all
$X\in T\M$ and hence, by \eqref{Vevolvg} in  \Lem{curvVlemma} and
$u_t>0$ we obtain equation \eqref{Vevolvg1sys}. Moreover, as $V$ is
parallel, \Lem{dtdtulemma} gives us  \eqref{dtdtu2} which is nothing
else than \eqref{dtdtu2sys}. Combining \Lem{dtdtulemma} and
\Lem{duallemma} shows that also \eqref{dtdtu1dual} holds. Now using
the obtained \eqref{Vevolvg1sys} in order to substitute  the term
$\ddot \g_t(U_t,X)$ in \eqref{dtdtu1dual}, we obtain
\eqref{dtdtu1sys}.

Now assume that, for a given $\lambda$, the triple $(\g_t,U_t,u_t)$
of Riemannian metrics, vector fields and functions on $\M$ solves
equations (\ref{Vevolvg1sys}, \ref{dtdtu1sys}, \ref{dtdtu2sys}) with
the given initial conditions. Using this solution, we define the
vector field  $V=u_t T-U_t$ on $\overline{\cU}$,  with
$T=\lambda^{-1}\del_t$. Then, by equation \eqref{Vevolvg} in Lemma
\ref{curvVlemma}, equation \eqref{Vevolvg1sys} shows that $V$
satisfies
\[\bR(T,X)V=0\]
for all $X\in T\M$.
We can use  equation \eqref{Vevolvg1sys} to
substitute $-\frac{1}{2}\ddot\g_t(U_t,X)$ into equation
\eqref{dtdtu1sys}. This equation becomes exactly equation
\eqref{dtdtu1dual}  in Lemma \ref{duallemma}.
 On the other hand,
equation \eqref{dtdtu2sys} is just \eqref{dtdtu2}, and hence, by
Lemma~\ref{dtdtulemma} we obtain that
\[\bnab_{\del_t}\bnab_{\del_t}V=0.\]
The initial condition $u_0=u$ together with the constraint
$\g(U,U)-u^2 = 0$ show that the vector field $V|_{\M }=u_0 T-U_0\in
T\overline{\cU}|_{\M}$ has constant length $0$  along the initial
manifold $\M \simeq \{0\}\times \M$. Then, as in
Lemma~\ref{prlemma}, we have
\[
0 = X \bg (V,V)|_{t=0} = 2\bg(\bnab_XV,V)|_{t=0} = 2u_0
\bg(\bnab_XV|_{t=0},T_0)|_{t=0} - 2\bg(\bnab_XV|_{t=0},U_0)
\]
which shows that $\,\bnab_XV|_{t=0}=0\,$ if
$\,\pr_{T^\perp}(\bnab_XV|_{t=0})=0$. But by  \Prop{nabVlemma} we have
that  $\pr_{T^\perp}( \bnab_XV|_{t=0})=0\,$, and hence
$\,\bnab_XV|_{t=0}=0\,$, if
\[\nabla^0_XU_0=-u_0\W_0(X),\]
which is the constraint condition \eqref{constraintU}. Thus, along
the initial manifold $\M$ we have
\[\bnab_XV|_{t=0}=0.\]
Finally, we show that, along the initial manifold $\M$ we also have $\bnab_{\del_t}V|_{t=0}=0$. As computed above, we have
\[
\bnab_{\del_t}V|_{t=0}\ =\ \big(\dot u_0-d\lambda(U_0)\big)T_0+u_0\
\grad^0(\lambda) -\dot U_0  +\lambda W_0(U_0) = 0,
\]
along $\M$,
because of the initial conditions  \eqref{icdU0}. Hence, all assumptions of   Lemma \ref{curvlemma}
 are satisfied and we obtain that $V$ is parallel on $\overline{\cU}$. But since $V$  has constant length $0$
 along the initial surface $\M$ and is parallel, it has constant length $0$   everywhere.
\eprf


\bbem\label{symrem}We observe that the Cauchy-Kowalevski theorem  can be applied to the PDE system  in Theorem \ref{Theorem-parallel-vf}, provided that all initial data are assumed as analytic.
It guarantees existence and
uniqueness of solutions to the evolution equations
\eqref{Vevolvg1sys} - \eqref{dtdtu2sys} for the given constraints
and initial conditions for data on $\M$  in an open neighbourhood of ${\{0\}\times
\M}$. However, we do not know
whether the  solution $\g_t$  defines a family of {\em symmetric} bilinear forms, and hence for small $t$ 
Riemannian metrics, on $\M$. 
The issue here is 
 that the right-hand-side of equation 
\eqref{Vevolvg1sys} in general does not map symmetric bilinear forms to symmetric bilinear forms, i.e., it  is not
an operator on  the bundle of  symmetric
bilinear forms on $\M$. 
In fact, the term $d^{\nabla^t}\!\big(\frac{\dot
\g_t}{\lambda}\big)(U_t,Y,X)$ {\em a priori} is not symmetric in
$(Y,X)$\footnote{We thank Olaf M\"uller for pointing out to us this gap
in an earlier draft of this paper.}.
Nevertheless, in Proposition \ref{Prop-example} we will give a class of
examples where the solution $\g_t$ is symmetric.

 Note that, due
to the Bianchi identity (\ref{Bianchi-dnabla}), the bilinear form
$d^{\nabla^t}\!\big(\frac{\dot \g_t}{\lambda}\big)(U_t, \cdot,
\cdot)$ is symmetric, if and only if the 2-form $\,
d^{\nabla^t}(\frac{\dot \g_t}{\lambda})(\cdot, \cdot ,U_t)$ vanishes
on $\M$. Written in terms of the Weingarten operator $W_t$ of
$(\M,\g_t)$ this is equivalent to the condition that the image of
the 2-form $d^{\nabla^t}\W_t$ is orthogonal to $U_t$ with respect to
$\g_t$. \ebem

\section{Solving the evolution equations  for analytic data}
\label{analytic}

For analytic metrics, we will now overcome the difficulty described in Remark \ref{symrem}. In fact, 
Lemma \ref{curvVlemma} reveals that the problematic evolution equation \eqref{Vevolvg1sys} for $\g_t$ is equivalent
to the curvature condition \eqref{Rpde} from Lemma \ref{curvlemma}. In order to overcome the symmetry issue, we next show that for
analytic Lorentzian manifolds of the form \eqref{Lmetric}, one can alternatively characterize parallel
null vector fields by relaxing \eqref{Rpde} and using a method similar to the one in \cite{AmmannMoroianuMoroianu13}.
This approach turns out to yield evolution equations for symmetric bilinear forms $\g_t$.

\blem\label{curvlemma2}
Let $(\bM,\bg)$ be an analytic Lorentzian manifold of the form \eqref{Lmetric}
and let $V$ be a analytic null vector field on $\bM$. Then $V$ is parallel for $\bg$, i.e.  $\bnab V =0$,
if and only if the following conditions are satisfied:
\be
\bR (X,V,V,Y)&=&0, \text{ for all $X,Y \in T\M$},
\label{Rpde2}
\\
\bnab_{\del_t}\bnab_{\del_t}V&=&0,
\label{Vpde2}
\\
\bnab_{X}V|_{\{0\} \times \M}&=& 0, \text{ for all $X\in
T\M|_{\{0\}\times \M}$ }, \label{Vconstr2b}
\\
\bnab_{\del_t}V|_{\{0\}\times \M}&=&0. \label{Vic2} \ee \elem
\bprf Clearly, if $V$ is parallel, all the conditions follow immediately.
On the other hand, assuming the four conditions, it follows from \eqref{Vpde2}
and \eqref{Vic2} as in the proof of Lemma \ref{curvlemma} that $\bnab_{\del_t}V=0$ everywhere.
It remains to show that $V$ is also parallel in spacelike directions. \\
For the rest of the proof let $X,Y$ be vectors in $T\M$ or lifts of vector fields on $\M$ to $\bM$.
We consider the sections $A,B$ of the bundle $H:=(T^\perp)^*\otimes T\bM\to \bM$ and the section $C$ of the bundle
$ K:=\Lambda^2(T^\perp)^* \otimes T\bM\to \bM$ defined by
\beq A(X) &:=& \bnab_X V, \\
     B(X) &:=& \bR(T,X)V, \\
     C(X,Y)&:=& \bR(X,Y)V,
\eeq for $X,Y \in T\M$ and denote also by  $\bnab$  the covariant derivatives on
$H$ and $K$ induced by the Levi-Civita connection $\bnab$  of $\bg$. In order to verify that $A \equiv 0$, we decompose as in
\eqref{VTU}, $V = uT- U$, denote by $N$ the unit length space-like
vector field $N:= \frac{U}{u} \in\Gamma (T^\perp)$  and show that
the tripel $(A, B, C)$ solves the linear PDE

\be
\bnab_{\partial_t} {\begin{pmatrix} \alpha \\ \beta \\ \gamma \end{pmatrix}} = Q \begin{pmatrix} \alpha \\ \beta \\
\gamma \end{pmatrix}, \label{nabla-eq}
\ee
where $\begin{pmatrix} \alpha \\ \beta \\ \gamma \end{pmatrix} \in \Gamma(H \oplus H \oplus K)$ and $Q$ is
the linear operator on $H \oplus H \oplus K$ given by
\[ \begin{pmatrix} \alpha \\ \beta \\ \gamma \end{pmatrix} \stackrel{ Q}{\longmapsto} \lambda \left(%
\begin{array}{c}
 \beta + \alpha \circ \W \\[0.1cm]
  d^{\bnab}\beta(N,\cdot) + \gamma(W(N),\cdot) + \gamma(N,W(\cdot)) + (\bR(T,\cdot) \wedge \alpha)(N, \cdot) + \gamma(\grad(\log\lambda),\cdot) \\
  d^{\bnab}\beta + \gamma(W(\cdot),\cdot) + \gamma(\cdot, W(\cdot)) + \bR(T,\cdot)\wedge \alpha
\end{array}%
\right).
\]
From $N=T - \frac{1}{u}V$ and $\bnab_{\partial_t}V = 0$ it
follows that \be \bnab_T N = \grad(\log\lambda) +
\frac{\dot{u}}{\lambda u^2} V. \label{nabN2} \ee
Furthermore,  for $(t,x) \in \{t\}\times \M$ let  
$(e_1,...,e_n)$ denote a $\g_t$-orthonormal basis for $T_x\M$. We
observe that (\ref{Rpde2}) combined with the skew-symmetries of
$\bR$ yields the following identities (at $(t,x)$): \be \bR
(X,V,V,T)&=& \bR(X,V,V,N)\  =\  0, \nonumber
\\
\bR (T,X,V,T)&=& \bR(T,X,V,N) \ =\  \bR(N,X,V,N) \ =\  \bR(N,X,V,T), \nonumber
\\
\bR (T,X)V&=& \sum_{i=1}^n \bR(T,X,V,e_i)e_i - \bR(T,X,V,T)T\nonumber
\\
& =& \sum_{i=1}^n \bR(N,X,V,e_i)e_i - \bR(N,X,V,T)T \nonumber \\
&=& \bR(N,X)V,
\label{symR1}
\\
\bR(T,V)V &=& \sum_{i=1}^n \bR(N,V,V,e_i)e_i - \bR(N,V,V,N)T \ =\  0,
\\
\bR(X,V)V &=& 0.
\ee
In the following, we view $\bR$ as 2-form on $\bM$ taking values in the $\bg$-skew-symmetric endomorphisms.
With these preparations at hand and using the second Bianchi identity for $\bR$ we compute
\beq
(\bnab_{T}A)(X) &=& \bnab_T (A(X)) - A(\bnab_T X)
\\& = &\bR(T,X)V + \bnab_{[T,X]}V - \bnab_{\bnab_TX}V \\
&=&B(X) + A(W(X)),
\eeq
as well as
\beq
(\bnab_{T}B)(X) &=& \bnab_T (B(X)) - B(\bnab_T X) \\
                &=& \bnab_T (\bR(T,X)V) - \bR(T,\bnab_TX)V \\
                &\stackrel{{\eqref{symR1}}}{=}& \bnab_T(\bR(N,X)V) - \bR(N,\prT \bnab_TX)V \\
                &=& \bnab_T (\bR)(N,X)V + \bR(\bnab_TN,X)V + \bR(N,\bnab_TX)V - \bR(N,\prT \bnab_TX)V \\
                &\stackrel{{\eqref{symR1}}}{=}& \bnab_N (\bR)(T,X)V + \bnab_X (\bR)(N,T)V + \bR(\bnab_TN,X)V \\
                &\stackrel{(\ref{nabN2})}{=}& \bnab_N(\bR(T,X)V) - \bR(\bnab_NT,X)V - \bR(T,\bnab_NX)V - \bR(T,X)\bnab_NV \\
                && + \bnab_X(\bR(N,T)V) - \bR(\bnab_XN,T)V - \bR(N,\bnab_XT)V - \bR(N,T)\bnab_XV \\
                &&
                +\bR(\grad(\log\lambda),X)V \\
                &=& (\bnab_N B)(X) + C(W(N),X) -\bR(T,X)A(N) \\
                &&-(\bnab_X B)(N) -C(W(X),N) +\bR(T,N) A(X) + C(\grad(\log\lambda),X),
\eeq
and 
\beq
(\bnab_{T}C)(X,Y) &=& \bnab_T (\bR(X,Y)V) - \bR(\bnab_T X,Y)V - \bR(X,\bnab_TY)V \\
                  &=& (\bnab_T \bR)(X,Y)V = (\bnab_X \bR)(T,Y)V + (\bnab_Y \bR)(X,T)V \\
                  &=& \bnab_X(\bR(T,Y)V) - \bR(\bnab_XT,Y)V - \bR(T,\bnab_XY)V - \bR(T,Y) \bnab_XV \\
                  && + (\bnab_Y \bR(X,T)V ) - \bR(\bnab_YX,T)V - \bR(X,\bnab_YT)V - \bR(X,T) \bnab_YV \\
                  &=& (\bnab_X B)(Y) + C(W(X),Y) - \bR(T,Y)A(X) - (\bnab_Y B)(X) \\
                  && -C(W(Y),X) + \bR(T,X)A(Y).
\eeq These calculations prove that $(A,B,C)$ satisfy equation \eqref{nabla-eq}, being a linear PDE
which separates into $\partial_t = \lambda \cdot T$-derivatives on
the left-hand-side and spacial derivatives of order at most one on
the right-hand-side. At $t=0$ we have that $A=0$ by assumption.
Differentiating this again in direction of $\M$ and
skew-symmetrizing yields that also $C=0$ at $t=0$. Finally, it
follows that at $t=0$ we have $B(X)=\bR(T,X)V
= C(N,X) = 0$ by \eqref{symR1}. As all data are analytic,
the Cauchy-Kowalevski Theorem guarantees the existence of a unique
analytic solution. From the initial conditions it follows that $A
\equiv 0$, $B \equiv 0$, $C \equiv 0$, i.e. $\bnab_X V = 0$
everywhere. \eprf
\bbem \label{rewrite}
By inserting $V=u(T-N)$, condition (\ref{Rpde2}) from Lemma \ref{curvlemma2} becomes equivalent to
\be
\bR(X,T,T,Y) = \bR(N,X,Y,T) + \bR(N,Y,X,T) - \bR(X,N,N,Y)\label{R-bed}
\ee
for $X,Y \in T\M$. Rewriting (\ref{R-bed}) in terms of $t-$dependent data on $\M$ using (\ref{2ff})-(\ref{mainardi}),
solving for the $\ddot \g_t$-term and setting $U_t = u_t N$  yields the equivalent formulation

\be
 \ddot \g_t(X,Y)
 &=& \frac{\lambda^2}{u_t}
 \left( d^{\nabla^t}\Big(\frac{{\dot
\g_t}}{\lambda}\Big)(U_t,X,Y)+ d^{\nabla^t}\Big(\frac{{\dot
\g_t}}{\lambda}\Big)(U_t,Y,X) \right) + \frac{1}{2} \dot
\g_t(X,\dot\g_t^\sharp(Y))
 \nonumber \\
 &&  + \dot{(\log\lambda)}\dot{\g}_t(X,Y)  + 2 \lambda \Hess^t(\lambda)(X,Y) + 2 \frac{\lambda^2}{u_t^2} \R_t(X,U_t,U_t,Y)
 \nonumber \\
&& +  \frac{1}{2u_t^2} \Big(\dot \g_t(X,Y) \dot \g_t(U_t,U_t) - \dot
\g_t(X,U_t) \dot \g_t(Y,U_t)\Big).   \nonumber \ee
\ebem 
The previous calculations
directly imply the following statement, which in contrast to Theorem
\ref{Theorem-parallel-vf} we can prove for analytic data only:
\btheo\label{Theorem-parallel-vf2}
  Let
$(\M,\g, \W, U)$ be an analytic Riemannian manifold together with a
field of $\g$-symmetric and analytic endomorphisms $\W$, with
corresponding symmetric bilinear form $\II:=\g(\W.,.)$, and  an analytic
vector field $U$ 
satisfying the following constraint equation
\be\label{constraintU2} \nabla^\g U+u\W&=&0,
\ee
where $u^2=
\g(U,U)$.
Then,  for any positive analytic function $\lambda$  on $\RR\times \M$, the triple $(\g_t,
U_t, u_t)$ of analytic one-parameter families of Riemannian metrics,
vector fields and functions on $\M$ defines an analytic Lorentzian
metric
\[
\bg=-\lambda^2dt^2 +\g_t
\]
on an open neighbourhood $\overline{\cU}(\{0\} \times \M) \subset
\rr\times \M$ with analytic parallel vector field
\[V=\frac{u_t}{\lambda} \del_t-U_t,\]
if and only if $\g_t$, $U_t$ and $u_t$ satisfy the following system
of PDEs on $\overline{\cU}$,
\be
 \ddot \g_t(X,Y)
 &=& \frac{\lambda^2}{u_t}
 \left( d^{\nabla^t}\Big(\frac{{\dot
\g_t}}{\lambda}\Big)(U_t,X,Y)+ d^{\nabla^t}\Big(\frac{{\dot
\g_t}}{\lambda}\Big)(U_t,Y,X) \right) + \frac{1}{2} \dot
\g_t(X,\dot\g_t^\sharp(Y))
 \nonumber \\
 &&  + \dot{(\log\lambda)}\dot{\g}_t(X,Y)  + 2 \lambda \Hess^t(\lambda)(X,Y) + 2 \frac{\lambda^2}{u_t^2} \R_t(X,U_t,U_t,Y)
 \label{Vevolvg1sys2} \\
&& +  \frac{1}{2u_t^2} \Big(\dot \g_t(X,Y) \dot \g_t(U_t,U_t) - \dot
\g_t(X,U_t) \dot \g_t(Y,U_t)\Big). \nonumber\\
\g_t(\ddot U_t,X) &=&
\nonumber
- \frac{\lambda^2}{2u_t}  d^{\nabla^t}(\frac{\dot
\g_t}{\lambda})(U_t,X,U_t)
-\frac{1}{2} \dot{(\log\lambda)}\dot{\g}_t(U_t,X) - \lambda \Hess^t(\lambda)(U_t,X) \\
&&   \label{dtdtu1sys2} { }- \dot \g_t (\dot U_t,X) 
+u_t\g_t( [\del_t,\grad^t\lambda],X)  +\frac{u}{2}\dot\g_t(\grad^t\lambda, X)
\\
&&{ } +\left(2\dot u_t-d\lambda(U_t)\right) d\lambda(X), \nonumber
\\[2mm]
\label{dtdtu2sys2} \ddot{u}_t&=&\g_t( [\del_t,\grad^t\lambda ],U_t)
+2 d\lambda (\dot{U}_t) +\frac{3}{2} \dot\g_t (\grad^t(\lambda),
U_t) - u_t\|\grad^t\lambda\|^2_{\g_t}, \ee  with the initial
conditions 
\belabel{icg02}
\begin{array}{rcl}
 \g_0&=&\g,
\\
\dot \g_0&=& -2\lambda_0 \II,
\\
U_0&=&U,
\\
\dot U_0&=& u \ \grad^\g(\lambda_0)   +\lambda_0\W(U),
\\
u_0&=&u,
\\
\dot u_0&=&d\lambda_0(U).
\end{array}
\eeqn

\etheo
\bprf
This is in complete analogy to the proof of Theorem~\ref{Theorem-parallel-vf}: one verifies as in Theorem~\ref{Theorem-parallel-vf} and in this case additionally using Remark \ref{rewrite}
that the equations  \eqref{constraintU2} -  \eqref{dtdtu2sys2} and the initial conditions \eqref{icg02} are just a reformulation of the conditions
\eqref{Rpde2}-\eqref{Vic2} appearing in Lemma \ref{curvlemma2} in terms of $t$-dependent data on $\M$. Note that in analogy
to Theorem  \ref{Theorem-parallel-vf} the evolution equation \eqref{dtdtu1sys2} arises from substituting the
term $\ddot \g_t(U_t,X)$ in \eqref{dtdtu1dual} via \eqref{Vevolvg1sys2} and here additionally using that
$((d \log \lambda) \wedge \dot \g_t) (U_t,U_t,X)=0$.
\eprf

\bbem In Theorems \ref{Theorem-parallel-vf} and
\ref{Theorem-parallel-vf2} the constraint equation
\eqref{constraintU} (or \eqref{constraintU2}, respectively) is
needed in order to ensure that $V$ is parallel along $\M$. Note also
that the constraint $u^2-\g(U,U)=0$ is compatible with the initial
conditions \eqref{icdU0} (resp. \eqref{icg02}). Indeed,
\[
2u \dot u_0
=
\dot{u^2}|_{t=0}
=
2 \g_0(\bnab_{\del_t}U_t|_{t=0},U_0)
=
2 \g_0(\dot U_0,U_0) -2\lambda \II_0(U_0,U_0)
=
2ud\lambda(U).
\]
\ebem
In contrast to \eqref{Vevolvg1sys}, the $\g_t$-evolution equation \eqref{Vevolvg1sys2}
is manifestly an equation in the bundle of symmetric bilinear forms on $\M$, i.e. at
least for small $t$ the solutions $\g_t$ are Riemannian metrics on $\M$. By the Cauchy-Kowalevski Theorem we obtain the following corollary which gives the statement of Theorem \ref{maintheo1} in the introduction:

\bcorl{Cor-KWparallel}
 Let
$(\M,\g, \W, U)$ be an analytic Riemannian manifold together with
a field of $\g$-symmetric, analytic endomorphisms $\W$, with
corresponding symmetric bilinear form $\II:=\g(\W.,.)$,  and an analytic
vector field $U$ satisfying the following constraint equation
\be\label{constraintU-2} \nabla^\g U+u\W&=&0,
\ee
where $u^2=
\g(U,U)$. 
Then, for any positive analytic
 function  $\lambda$ on $\RR\times \M$ there exists an open neighbourhood $\overline{\cU}(\{0\}\times \M) \subset
 \RR \times \M$  and an unique analytic Lorentzian metric
\[ \bg=-\lambda^2dt^2 +\g_t\]
on $\overline{\cU}$ which admits an analytic null
 parallel  vector field  $\;V=\frac{u_t}{\lambda} \del_t-U_t$, where
 $(\g_t, U_t, u_t)$ are solutions of the evolution equations \eqref{Vevolvg1sys2},
\eqref{dtdtu1sys2}, \eqref{dtdtu2sys2} with initial conditions
\eqref{icg02}.
 \ecor

\medskip

Finally, we consider an explicit example where  we find a solution for the evolutions equations in both Theorems \ref{Theorem-parallel-vf} and \ref{Theorem-parallel-vf2}.

\begin{Proposition}\label{Prop-example}
Let $(M,\g,\W,U,u)$ be a Riemannian manifold with an symmetric
endomorphism field $\W$, a vector field $U$ and a function $u$ on
$M$ satisfying the constraint equations
\begin{eqnarray*} \nabla^{\g} U &=& - u \W,\\
                  \g(U,U) &=& u^2 > 0.
                  \end {eqnarray*}
Let, in addition, $\W$ be a Codazzi tensor, i.e.,
$d^{\nabla^{\g}}\W=0$, and $\lambda=1$. Then
\begin{eqnarray}
\g_t &:=& \g - 2t \g(W(\cdot),\cdot) + t^2\g(W^2(\cdot),\cdot) =
\g((1-tW)^2(\cdot),\cdot),  \label{special gt}\\
U(t,x) &:=& \frac{1}{(1-t\W_x)} U(x) = \sum\limits_{k=0}^{\infty} \W_x^k(U(x)) t^k, \label{special Ut} \\
u(t,x) &:=& u(x). \label{special ut}
\end{eqnarray}
are solutions
to the evolution equations in both Theorem \ref{Theorem-parallel-vf} and Theorem \ref{Theorem-parallel-vf2}. These solutions are defiend on 
\[ \overline{\cU}(\{0\}\times \M) := \{ (t,x) \in \RR \times \M \mid
t\|\W_x\|_{\g_x} < 1 \}.\] 
In particular, the above solution $\g_t$ to the evolution equation \eqref{Vevolvg1sys} is a symmetric bilinear form.
\end{Proposition}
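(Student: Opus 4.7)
The plan is to bypass direct substitution into the elaborate evolution equations of Theorems~\ref{Theorem-parallel-vf} and~\ref{Theorem-parallel-vf2} and instead verify that $V := u\,\partial_t - U_t$ is a parallel null vector field on $(\overline{\cU}, -dt^2 + \g_t)$; once this is established, the ``if and only if'' parts of those theorems immediately give that the triple $(\g_t, U_t, u_t)$ satisfies the corresponding evolution systems with the required initial data \eqref{icg02}, which are themselves a routine check at $t=0$ using $\lambda \equiv 1$.

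The key idea, and the step I expect to be the main obstacle, is to use the Codazzi hypothesis to obtain an explicit formula for the Levi-Civita connection $\nabla^t$ of $\g_t$. Setting $P := 1-t\W$, the $\g$-symmetry of $\W$ yields $\g_t(X,Y) = \g(PX,PY)$, which suggests the ansatz
\[
\tilde\nabla^t_X Y \;:=\; P^{-1}\nabla^\g_X(PY).
\]
This $\tilde\nabla^t$ is metric-compatible with $\g_t$ by $\nabla^\g\g = 0$, and its torsion is
\[
P^{-1}\bigl((\nabla^\g_X P)Y - (\nabla^\g_Y P)X\bigr) \;=\; -t\,P^{-1}\bigl((\nabla^\g_X\W)Y - (\nabla^\g_Y\W)X\bigr),
\]
which vanishes precisely because $\W$ is Codazzi. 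Hence $\tilde\nabla^t = \nabla^t$, after which every remaining check collapses to one line.

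With this identity in hand, the parallelism of $V$ reduces to verifying the two conditions in Proposition~\ref{nabVlemma}. In the $\partial_t$-direction, equation \eqref{nabtZ1} with $\lambda = 1$ demands $\dot U_t = W_t(U_t)$, which holds immediately from $\dot U_t = \W P^{-2}U$ and $W_t = P^{-1}\W$. In a spatial direction $X \in T\M$, equation \eqref{nabZ1} demands $\nabla^t_X U_t = -u\,W_t(X)$; using the formula above together with the constraint $\nabla^\g U = -u\W$ one computes $\nabla^t_X U_t = P^{-1}\nabla^\g_X(PU_t) = P^{-1}\nabla^\g_X U = -u\,P^{-1}\W(X) = -u\,W_t(X)$, while the auxiliary scalar relation $du_t(X) = -\II_t(U_t,X)$ follows by differentiating $\g(U,U) = u^2$. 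Nullity of $V$ is equally immediate from $\g_t(U_t,U_t) = \g(PU_t,PU_t) = \g(U,U) = u^2$. The symmetry of $\g_t$ asserted at the end of the proposition is evident from the closed-form expression $\g_t(X,Y) = \g((1-t\W)X,(1-t\W)Y)$, which uses only the $\g$-symmetry of $\W$ and sidesteps the obstruction flagged in Remark~\ref{symrem}.
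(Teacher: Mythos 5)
Your proposal is correct, and it takes a genuinely different route from the paper. The paper's proof specialises the evolution systems of Theorems~\ref{Theorem-parallel-vf} and~\ref{Theorem-parallel-vf2} to $\lambda=1$ and verifies them by direct substitution: it computes $\dot\g_t,\ddot\g_t,\dot U_t,\ddot U_t$, establishes $\W_t=\W(1-t\W)^{-1}$ and the connection identity $\nabla^t=\B_t^{-1}\circ\nabla^{\g}\circ\B_t$ (your formula \eqref{lc-sp}, which the paper asserts from the Codazzi property but which you actually justify via the torsion computation), then shows $d^{\nabla^t}\dot\g_t=0$ and evaluates the curvature term $\R_t(X,U_t,U_t,Y)$ so that both systems collapse to the simple system \eqref{gt special-2}, which the ansatz visibly solves. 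You instead verify only the two first-order conditions \eqref{nabZ1} and \eqref{nabtZ1} of Proposition~\ref{nabVlemma} (together with nullity, $\g_t(U_t,U_t)=\g(PU_t,PU_t)=u^2$), conclude that $V=u\,\del_t-U_t$ is a parallel null vector field, and then read off the evolution equations from the ``only if'' direction of the two theorems. This is shorter and avoids the second-derivative bookkeeping and the curvature computation entirely; what it costs is a dependence on the theorems' equivalences, and you should note one point explicitly: Theorem~\ref{Theorem-parallel-vf2} is stated for analytic data, whereas Proposition~\ref{Prop-example} is not, so you must observe that the direction you invoke (parallel vector field $\Rightarrow$ evolution equations) needs no analyticity — it is just the trivial implication in Lemma~\ref{curvlemma2} combined with the algebraic reformulation of Remark~\ref{rewrite}. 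With that remark added, and the routine check of the initial conditions \eqref{icg02} which you correctly flag, the argument is complete; it also delivers the final symmetry assertion about \eqref{Vevolvg1sys} for free from $\g_t=\g(P\cdot,P\cdot)$, exactly as the paper does.
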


\begin{proof}
For $\lambda=1$, the evolution equations of Theorem
\ref{Theorem-parallel-vf} reduce to
\begin{eqnarray}
\ddot{\g}_t(X,Y)&=& \tfrac{1}{u_t}(d^{\nabla^t}\dot{\g}_t)(U_t,Y,X)
-
\dot{\g}_t(X,\W_t(Y)), \label{gt special2}\\
\g(\ddot{U}_t,X) &=& - \tfrac{1}{2u_t}
(d^{\nabla^t}\dot{\g}_t)(U_t,X,Y) - \dot{\g}_t(\dot{U}_t,X), \label{Ut special2}\\
\ddot{u}_t &=& 0, \nonumber
\end{eqnarray}
whereas the evolution equations of Theorem \ref{Theorem-parallel-vf2} reduce to
\begin{eqnarray}
\ddot \g_t(X,Y)
 &=&  \tfrac{1}{u_t} \left( (d^{\nabla^t}{\dot
\g_t})(U_t,X,Y) + (d^{\nabla^t}{\dot \g_t})(U_t,Y,X) \right) - \dot \g_t(X,W_t(Y))    \nonumber \\
&& + \frac{2}{u_t^2} \R_t(X,U_t,U_t,Y) + \frac{1}{2u_t^2} \big(\dot
\g_t(X,Y) \dot \g_t(U_t,U_t)
 - \dot \g_t(X,U_t) \dot \g_t(Y,U_t)\big),   \label{gt special}\\
\g(\ddot{U}_t,X) &=& - \tfrac{1}{2u_t}
(d^{\nabla^t}\dot{\g}_t)(U_t,X,U_t) - \dot{\g}_t(\dot{U}_t,X) \label{Ut special}\\
\ddot{u}_t &=& 0, \nonumber
\end{eqnarray}
with initial conditions $\g_0=\g$, $\dot{\g}_0=-2\II$, $U_0=U$,
$\dot{U}_0=\W(U)$, $u_0=u$, $\dot{u}_0=0$ in both cases. Hence $u(t,x)=u(x)$. It remains to show, that $\g_t$ and $U_t$ given by
(\ref{special gt}) and (\ref{special Ut}) satisfy (\ref{gt special})
and (\ref{Ut special}) as well as (\ref{gt special2})
and (\ref{Ut special2}) in case that $\W$ is a Codazzi tensor.
To this end, we observe that the definitions (\ref{special gt}) and (\ref{special Ut}) imply
\belabel{def gt-1}
\begin{array}{rcl}
\dot{\g}_t(X,Y) &=& - 2 \g(\W(1-t\W)(X),Y))
 \ = \ -2 \g_t\big(\tfrac{W}{(1-t\W)}(X),Y\big)
 \\[2mm]
 \ddot{\g}_t(X,Y) & =&  + 2\g(\W^2(X),Y)), \\[2mm]
\dot{U}_t&=& \tfrac{\W}{(1-t\W)^2}U, \\[2mm]
\ddot{U}_t& = &
\tfrac{2\W^2}{(1-t\W)^3}U. 
\end{array}\eeqn
Hence, for  the Weingarten operator $\W_t$ of $(\M,\g_t)$ we obtain
\begin{eqnarray} \W_t = \frac{\W}{(1-t\W)}. \label{Wt special}
\end{eqnarray}
Next we show, that $\W_t$ is a Codazzi tensor for $\g_t$. Since
$-2\II_t = \dot{\g}_t$, this is equivalent to
$\,d^{\nabla^t}{\dot{\g}}_t= 0\,$. Since $\W$ is a Codazzi tensor
for $\g$ by definition, the tensor field $\B_t:=(1-t\W)$ is Codazzi
tensor for $\g$ as well. Therefore, the Levi-Civita connection of
\[\g_t := \B_t^*\g = \g(\B_t(\cdot),\B_t( \cdot)) =
\g(\B_t^2(\cdot),\cdot)\] is given by
\be
\nabla^t= \B_t^{-1} \circ
\nabla^g \circ \B_t. \label{lc-sp}
\ee
It follows
\begin{eqnarray*}
(d^{\nabla^t}\dot{\g}_t)(X,Y,Z) 
&=& -2 X\big(\g(\W(Y),\B_t(Z))\big) +2 \g( B_t\W(\nabla^t_XY),Z) +2
\g(\B_t\W(Y), \nabla^t_XZ) \\
& &  + 2 Y\big(\g(\W(X),\B_t(Z))\big) - 2 \g( B_t\W(\nabla^t_YX),Z)
 - 2 \g(\B_t\W(X), \nabla^t_YZ)\\
 &=& -2 \g(\nabla^{\g}_X(\W(Y)), \B_t(Z)) - 2 \g(\W(Y),
 \nabla^{\g}_X(\B_t(Z)))\\
 && + 2 \g(\nabla^{\g}_Y(\W(X)), \B_t(Z)) +  2 \g(\W(X),
 \nabla^{\g}_Y(\B_t(Z)))\\
 && + 2 \g(\W  (\nabla^{\g}_X (\B_t(Y))),Z)  + 2 \g(\W(Y),\nabla^{\g}_X(\B_t(Z))) \\
 && -  2 \g(\W  (\nabla^{\g}_Y (\B_t(X))),Z) -  2
 \g(\W(X),\nabla^{\g}_Y(\B_t(Z)))\\
 &=& - 2 \g(d^{\nabla^{\g}}\W(X,Y), \B_t(Z)) - 2\g(W([X,Y],\B_t(Z))\\
 &&  +  2  \g(d^{\nabla^{\g}}\B_t(X,Y), \W(Z)) +  2
 \g(\B_t([X,Y],\W(Z))\\
 &=& 0.
\end{eqnarray*}
Finally, we compute the curvature term appearing in (\ref{gt special}). It follows from (\ref{Wt special}) and (\ref{lc-sp}) that
\be
\nabla^t U_t &=& -u W_t, \nonumber \\
\g_t(U_t,U_t) &=& u^2,
\ee
and consequently, as $d^{\nabla^t}W_t = 0$, we obtain
\be
\R_t(X,U_t,U_t,Y) &=& \g_t (\nabla^t_X (-u W_t(U_t)) - \nabla^t_{U_t} (-u W_t(X)) - \nabla^t_{[X,U_t]}U_t,Y) \nonumber \\
&{=} & -X(\sqrt{\g_t(U_t,U_t)}) \g_t(W_t(U_t),Y) + U_t(\sqrt{\g_t(U_t,U_t)}) \g_t(W_t(X),Y)  \nonumber\\
& = & \g_t(W_t(X),U_t) \g_t(W_t(Y),U_t) -\g_t(W_t(X),Y) \g_t(W(U_t),U_t) \nonumber \\
& = & \frac{1}{4} \left( \dot \g_t (X,U_t) \dot \g_t(Y,U_t) - \dot \g_t(X,Y) \dot \g_t (U_t,U_t) \right).
\ee
With this property and using equation (\ref{Wt special}), the evolution equations (\ref{gt
special2}) and (\ref{Ut special2}) as well as (\ref{gt
special}) and (\ref{Ut special}) reduce further to the same system, namely
\belabel{gt special-2}
\begin{array}{rcl}
\ddot{\g}_t(X,Y)&=& - \dot{\g}_t(X,\W_t(Y))\  =\ - \dot{\g}_t(X,\tfrac{\W}{(1-t\W)}(Y)), \\
\g_t(\ddot{U}_t,X) &=& - \dot{\g}_t(\dot{U}_t,X). \end{array}\eeqn
The system (\ref{gt special-2})   obviously has the solution given in
equations (\ref{def gt-1}). 
\end{proof}

\bbem Many of the previous statements admit a more general
formulation for parallel causal vector fields $V$ of constant
length, i.e. $\bnab V = 0$ and $\bg(V,V) \equiv c \leq 0$. However,
in case of a timelike parallel vector field on $\bM$ one always has
a local metric splitting of $\bM$ into a line and a Riemannian
factor. For instance, if we replace the constraint equation $\g(U,U)
- u^2 = 0$ in Theorem \ref{Theorem-parallel-vf} by $\g(U,U)-u^2 = c
= -1$, then obviously for $\lambda \equiv 1$ the system of equations has
the trivial solution $\g_t\equiv \g$, $U_t\equiv 0$ and $u_t\equiv
1$ which gives the parallel timelike vector field $\del_t$ on the
product metric $\bg=-dt^2+\g$.
\ebem

\section{Constraint and evolution equations for parallel null
spinors}\label{section-lightlike-spinor}

In this section we  assume in addition, that $(\bM,\bg)$ is a Lorentzian {\em
spin} manifold. For a spinor field $\phi$ on $(\bM,\bg)$ we define
its Dirac current $V_{\phi} \in \fX(\bM)$ by
\[ \bg(V_{\phi},   X) = - \langle  X \cdot \phi, \phi \rangle\,,  \qquad  \forall\ X \in \fX(\bM).\]
The vector field $V_{\phi}$ is future oriented, causal, i.e., 
$\bg(V_{\phi},V_{\phi}) \leq 0$ and the zero sets of $V_{\phi}$ and
$\phi$ coincide. If $\phi$ is parallel, $V_{\phi}$ is parallel as
well, and thus either null or time-like. We call a spinor field $\phi$
{\em null}, if its Dirac current $V_{\phi}$ is null. In this case we
have $V_{\phi} \cdot \phi=0$ and $\langle \phi,\phi \rangle = 0$.

From now on, we assume that $(\bM,\bg)$ admits a 
a {\em  parallel null} spinor field $\phi$. Then, for  its Dirac
current $V:=V_{\phi}$ we apply the notations and results of Section
\ref{section-par-vf}. We fix a time-orientation $T$, and consider as
in (\ref{VTu}) and (\ref{VTU}) the projection $U$ of $-V$ onto
$T\M$, the function $u:= - \bg(V,T)$ and the unit vector field
$N:= \tfrac{1}{u}U$. Since $\phi$ is parallel, the Ricci endomorphism is zero or $2$-step nilpotent, i.e.,
$\bRic^2=0$. This is equivalent to 
$\bRic = f \cdot (V^\flat)^2$ with  a function $f$ on
$\bM$. This implies
\beq 
\bRic(T,T)&=&f\, u^2\\
\bRic(X,T)&=&f\, u^2\g(N,X)\\
\bRic(X,Y)&=&f\, u^2\g(N,X)\g(N,Y)
\eeq
for $X,Y\in T\M$.
Therefore,
\begin{eqnarray*} \bRic(T,T) &=& \bRic(N,N) \;=\; \bRic(N,T) \; =\;  fu^2, \\
\bRic(X,Y) &=& 0 \quad \mbox{ if  $X \in \mathrm{span}(T,N)^{\bot}$
or $Y \in \mathrm{span}(T,N)^{\bot}$}.
\end{eqnarray*}
In particular, the scalar curvature $\bscal$ of $(\bM,\bg)$ vanishes.
If $(\M,\g)$ is a space-like hypersurface of $(\bM,\bg)$ with the
normal vector field $T$, the second fundamental form $\II$, and the
Weingarten operator $\W$, the Ricci-tensor and the scalar curvature
of $(\bM,\bg)$ and $(\M,\g)$ are related by
\begin{eqnarray*} \bRic(T,T) &=& \bRic(N,T) \;  = \;
\bRic(N,N) \; = \; \tr_{\g} d^{\nabla}\II(N)  \label{Ricci-1}\\
\bRic(X,T)&=& \tr_{\g} d^{\nabla}\II(X)  \label{Ricci-2}\\
\bRic(X,Y)&=& \Ric(X,Y) - d^{\nabla}\II(N,X,Y) - \II(X,\W(Y)) +
\tr_{\g}\II \cdot\II(X,Y) \label{Ricci-3} \\[0.1cm]
\bscal &=& \scal - 2 \tr_{\g} d^{\nabla}\II(N)  - \|\II\|^2_{\g} + (\tr_{\g}
\II)^2. \qquad \qquad \label{scal}
\end{eqnarray*}
for all vectors $X, Y \in T\M$, where $d^{\nabla}\II(X):=
d^{\nabla}\II(X,\cdot,\cdot)$. Hence, we obtain

\begin{Proposition}[Ricci constraint conditions]\label{Prop-Ricci-constriants
spinor}$\,$\\ Let $(\bM,\bg)$ be a Lorentzian spin manifold with a
parallel null spinor field, and let $(\M,\g)$ be a space-like
hypersurface of $(\bM,\bg)$ with normal vector field $T$, second
fundamental form $\II$ and Weingarten operator $\W$. Then for all
vectors $X,Y \in N^{\bot}\subset T\M$,
\begin{eqnarray*}
 \tr_{\g} d^{\nabla}\II(X)  &=& 0,\\
\tr_{\g} d^{\nabla}\II(N) \!\cdot \! N &=& \diver_{\g}\II + \grad_{\g}(\tr_{\g}\II), \\
 2\, \tr_\g d^{\nabla}\II(N)  &=& \scal - \|\II\|_{\g}^2 + (\tr_{\g} \II)^2,
\end{eqnarray*} and   \begin{eqnarray*}
\Ric(X,Y) &=& d^{\nabla}\II(N,X,Y) + \II(\W(X),Y) -
\tr_{\g}\II\cdot\II(X,Y),\\
\Ric(X,N) &=&  \II(\W(X),N) - \tr_{\g}(\II) \cdot\II(X,N), \\
\Ric(N,N) &=& \tr_{\g} d^{\nabla}\II(N) - \tr_{\g}(\II) \cdot \II(N,N) +
\II(\W(N),N).
\end{eqnarray*}
\end{Proposition}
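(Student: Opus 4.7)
The plan is to combine the algebraic constraint on $\bRic$ coming from the parallel null spinor (already recorded in the discussion immediately above the proposition) with the Gauss-Codazzi-Mainardi relations displayed just before the statement. No new geometric machinery is needed; the argument is a careful reading off of the block structure relative to the splitting $T\bM|_{\M} = \langle T\rangle \oplus \langle N\rangle \oplus N^\perp$.

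As noted in the text immediately preceding the proposition, $\bnab\phi=0$ forces $\bRic = f \cdot V^\flat \otimes V^\flat$ for some smooth function $f$ on $\bM$, and hence $\bscal=0$. Writing $V=u(T-N)$, one has $V^\flat(T)=V^\flat(N)=-u$ and $V^\flat|_{N^\perp}=0$. This purely algebraic fact immediately produces the block decomposition
$\bRic(T,T)=\bRic(T,N)=\bRic(N,N)=fu^2$, together with $\bRic(X,Y)=0$ for every $X\in N^\perp\subset T\M$ and every $Y\in T\bM$.

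From here I would read off the claimed constraints one by one against the Gauss-Codazzi-Mainardi formulas displayed just above the statement. The identity $\bRic(X,T)=\tr_\g d^\nabla\II(X)$ evaluated at $X\in N^\perp$ forces $\tr_\g d^\nabla\II(X)=0$, and evaluated at $X=N$ it forces $\tr_\g d^\nabla\II(N)=fu^2$; combining both via $\tr_\g d^\nabla\II(X) = \div\II(X) + d\tr_\g\II(X)$ rewrites them as the single vector equation $\div\II+\grad(\tr_\g\II)=\tr_\g d^\nabla\II(N)\cdot N$. Substituting $\bscal=0$ into the scalar relation $\bscal=\scal-2\tr_\g d^\nabla\II(N)-\|\II\|^2_\g+(\tr_\g\II)^2$ produces the third trace identity. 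For the three identities in the second group, I would insert $\bRic(X,Y)=0$, $\bRic(X,N)=0$ and $\bRic(N,N)=\tr_\g d^\nabla\II(N)$ respectively into the Gauss relation
$\bRic(X,Y)=\Ric(X,Y)-d^\nabla\II(N,X,Y)-\II(X,\W(Y))+\tr_\g\II\cdot\II(X,Y)$
and solve for the intrinsic Ricci term.

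There is essentially no genuine obstacle: both the nilpotency of $\bRic$ (from the parallel null spinor) and the required Gauss-Codazzi-Mainardi relations are in place before the proposition, and the proof is nothing but careful substitution relative to the $(T,N,N^\perp)$-splitting. The one point I would double-check is the consistency of the sign conventions for $\div$ and for $d^\nabla\II$ between the formulas inherited from Section~2 and those displayed above the proposition, since a sign slip there would propagate through all three groups of identities.
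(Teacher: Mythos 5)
Your overall strategy coincides with the paper's: the proposition is stated there with no more justification than ``Hence, we obtain'' after the two displayed blocks, and five of the six identities do follow by exactly the substitutions you describe (the three trace identities from $\bRic(\cdot,T)$ and $\bscal=0$, the $\Ric(X,Y)$ formula for $X,Y\in N^{\bot}$, and the $\Ric(N,N)$ formula, where the term $d^{\nabla}\II(N,N,N)$ vanishes automatically by skew-symmetry of $d^\nabla\II$ in its first two arguments).

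There is, however, a genuine gap in your derivation of the $\Ric(X,N)$ identity. Setting $Y=N$ in the Gauss relation and inserting $\bRic(X,N)=0$ yields $\Ric(X,N)= d^{\nabla}\II(N,X,N)+\II(\W(X),N)-\tr_{\g}\II\cdot\II(X,N)$, which agrees with the stated formula only if $d^{\nabla}\II(N,X,N)=0$. That vanishing is \emph{not} a consequence of the algebraic structure $\bRic=f\,(V^\flat)^2$ on which your whole argument rests: by the Codazzi equation $d^{\nabla}\II(N,X,N)=\bR(N,X,N,T)$ is a single curvature component, not a trace, so the $2$-step nilpotency of $\bRic$ gives no control over it. To close the gap you need the pointwise curvature condition coming from the parallel Dirac current $V=u(T-N)$: from $\bR(N,X)V=0$ one gets $\bR(N,X,N,\cdot)=\bR(N,X,T,\cdot)$, hence $\bR(N,X,N,T)=\bR(N,X,T,T)=0$ by skew-symmetry, and therefore $d^{\nabla}\II(N,X,N)=0$. (Equivalently, one may invoke the hypersurface constraint $\g\bigl(d^{\nabla}\W(N,X),U\bigr)=0$ that follows from the imaginary $\W$-Killing spinor equation.) With that one additional input your proof is complete; your remark about checking sign conventions for $\div$ and $d^{\nabla}\II$ is well taken but does not affect the substance.
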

Let $(\overline S,\nabla^{\bS})$ denote the spinor bundle of
$(\bM,\bg)$ with the covariant derivative induced by the Levi-Civita
connection, and let $(S,\nabla^{S})$ be the spinor bundle of the
space-like hypersurface $(\M,\g)$ with its spin derivative. Then
there is a canonical identification of $S$ with $\overline S_{|\M}$
if $n$ is even and of $S$ with the half-spinors $\overline
S^+_{|\M}$ if $n$ is odd. In this identification, the Clifford
product with a vector field $X$ on $\M$ in both bundles is related
via
\be X \cdot \varphi = i \, T \; \bar{\cdot} \; X \; \bar{\cdot} \;
\phi_{|\M}, \label{Id1}\ee where $\varphi \in \Gamma(S)$ is
identified with $\phi_{|\M} \in \Gamma(\overline S^{(+)}_{|\M})$. In
the following we will omit the $\bar{\;\;}$ over the Clifford
multiplication in $\overline S$ in order to keep the notation
simple, it will always be clear in which spinor bundle we are
working. The Dirac current $U_{\psi}$ of a spinor field $\psi$ on a
Riemannian spin manifold $(\M,\g)$ is given by
\belabel{dirac}
 \g(U_{\psi},X) := - i \,(X \cdot \psi,\psi) , \qquad X\in T\M.\eeqn
If $\phi \in \Gamma(\overline S^{(+)})$ is a spinor field on $\bM$
and $\varphi:=\phi_{|\M} \in \Gamma(S)$ its restriction to $\M$, the
Dirac currents satisfies
\[ (V_{\phi})_{|\M} = \|\varphi\|^2 T_{|\M} - U_{\varphi}.\]
Using the above identification of the spinor bundles, the conditions
$\nabla^{\bS} \phi=0$ and $V_{\phi}\cdot \phi = 0$ translate into
the following conditions for the spinor field $\varphi= \phi_{|\M}$:

\begin{Proposition}[Spin constraint conditions]
\label{Prop-Constr-spinor}$\;$\\
If $(\bM,\bg)$ is a spin manifold with a parallel null spinor
field $\phi$. Then the spinor field $\varphi := \phi_{|\M}$ on the
space-like hypersurface $(\M,\g)$ satisfies
 \be \nabla^S_X
\varphi &=& \tfrac{i}{2}\, \W(X)\cdot \varphi \qquad
\forall \; X \in T\M, \label{spinor-1}\\
 U_{\varphi} \cdot \varphi &=& i\,u_{\varphi} \,\varphi, \label{spinor-2}
 \ee
where $\W$ is the Weingarten operator of $(\M,\g)$ and $u_{\varphi}=
\sqrt{\g(U_{\varphi},U_{\varphi})} = \|\varphi\|^2$.
\end{Proposition}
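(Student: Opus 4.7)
My plan is to transfer the two ambient conditions on $\phi$, namely $\bnab_X \phi = 0$ and the nullness condition $V_\phi \cdot \phi = 0$, down to the hypersurface using the spinor identification~(\ref{Id1}) and the relation $(V_\phi)_{|\M} = \|\varphi\|^2 T_{|\M} - U_\varphi$ recalled just before the proposition.

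The first step is to invoke the standard Gauss-type formula for the spin connection: for $X \in T\M$, under the canonical identification $\phi|_\M \leftrightarrow \varphi$,
$$
\bnab_X \phi \;=\; \nabla^S_X \varphi + \tfrac{1}{2}\, T \cdot \W(X) \cdot \phi.
$$
This is a routine lift to the spin bundle of the Gauss-Weingarten formulas $\bnab_X T = -\W(X)$ and $\bnab_X e_i = \nabla^\g_X e_i + \g(\W(X),e_i)\, T$. Applying (\ref{Id1}) with $X$ replaced by $\W(X) \in T\M$ gives $T \cdot \W(X) \cdot \phi = -i\, \W(X) \cdot \varphi$, so setting $\bnab_X \phi = 0$ yields (\ref{spinor-1}) directly.

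For the algebraic identity (\ref{spinor-2}), I would use the nullness $V_\phi \cdot \phi = 0$. The decomposition $(V_\phi)_{|\M} = \|\varphi\|^2 T - U_\varphi$ already identifies $u_\varphi = \|\varphi\|^2$ and rewrites the nullness condition as
$$
U_\varphi \cdot \phi \;=\; u_\varphi\, T \cdot \phi.
$$
Multiplying on the left by $T$ and using $T \cdot T \cdot \phi = \phi$ (since $\bg(T,T) = -1$ in the Clifford convention $X\cdot X = -\bg(X,X)$) produces $T \cdot U_\varphi \cdot \phi = u_\varphi\, \phi$. Now (\ref{Id1}) with $X = U_\varphi$ reads $U_\varphi \cdot \varphi = i\, T \cdot U_\varphi \cdot \phi$, which equals $i u_\varphi \varphi$, giving (\ref{spinor-2}).

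The principal obstacle is purely book-keeping: one must keep the sign and factor-of-$i$ conventions consistent across the two Clifford algebras (Lorentzian on $\bS$ and Riemannian on $S$), the identification (\ref{Id1}), the anti-commutation relations used to move $T$ past tangential vectors, and the sign in the spin Gauss formula. Once those conventions are fixed coherently, both assertions reduce to short algebraic manipulations with no analytic content.
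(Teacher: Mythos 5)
Your argument is correct and is exactly the standard one: the paper itself omits the proof of Proposition \ref{Prop-Constr-spinor}, deferring to \cite{baer-gauduchon-moroianu05} and \cite{baum-mueller08}, and those references derive \eqref{spinor-1} from the spinorial Gauss formula combined with the identification \eqref{Id1}, and \eqref{spinor-2} from $V_\phi\cdot\phi=0$ together with $(V_\phi)_{|\M}=\|\varphi\|^2T-U_\varphi$, precisely as you do. Your bookkeeping of signs and factors of $i$ is consistent with the paper's conventions ($\W=-\bnab T|_{T\M}$, $T\cdot T\cdot\phi=\phi$), so no gap remains beyond the routine verification of the Gauss formula you invoke.
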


\noindent For a detailed explanation of the identifications used
above and a proof of Proposition \ref{Prop-Constr-spinor} we refer
to \cite{baer-gauduchon-moroianu05} and \cite{baum-mueller08}. For an arbitrary
symmetric $(1,1)$-tensor field $\W$ on a Riemannian spin manifold
$(\M,\g)$ we call a spinor field $\varphi$ on $\M$, satisfying
(\ref{spinor-1}) and (\ref{spinor-2}), an {\em
 imaginary $\W$-Killing spinor}.
 In \cite{baer-gauduchon-moroianu05}  B\"ar, Gauduchon and Moroianu consider the case of a  {\em real $W$-Killing spinor} $\vf$ on a semi-Riemannian
manifold $(\M,\g)$. They show that, if $W$ is a
Codazzi tensor,  the manifold $(\M,\g)$ can be embedded as a
hypersurface into a Ricci flat manifold $(\bM,\bg=dr^2+\g_r)$ equipped with a parallel spinor
which restricts to $\varphi$.

We will now show, that any imaginary $\W$-Killing spinor $\varphi$
on a Riemannian spin manifold $(\M,\g)$ can be extended to a
parallel light-like spinor field $\phi$ on an open neighbourhood
$\overline{\cU}(\{0\} \times \M) \subset \rr \times \M$ with a
Lorentzian metric $\bg:= - \lambda^2 dt^2 + \g_t$ such that
$\phi_{|\M} = \varphi$, at least if all given data are real
analytic. First we show, that the Dirac current
$U_{\varphi}$ of $\varphi$ satisfies the constraint conditions
(\ref{constraintU2})  of Theorem
\ref{Theorem-parallel-vf2}.

\begin{Lemma}\label{Lemma-Killingspinor-1} Let $\varphi$ be an
imaginary $\W$-Killing spinor on $(\M,\g)$. Then the Dirac current
$U_{\varphi}$ of $\varphi$ satisfies
\be X(u_{\varphi})&=& -\g(\W(X),U_{\varphi}), \label{spinor-3}\\
   \nabla_X U_{\varphi} &=& - u_{\varphi} \W(X) \label{spinor-4}
   \ee
for all vector fields $X$ on $\M$. \end{Lemma}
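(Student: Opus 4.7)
The strategy is to establish both identities in Lemma~\ref{Lemma-Killingspinor-1} by differentiating the pointwise definitions $u_\varphi=(\varphi,\varphi)$ and $\g(U_\varphi,Y)=-i(Y\cdot\varphi,\varphi)$ in direction $X$ and then substituting the imaginary $\W$-Killing equation \eqref{spinor-1}; interestingly, the algebraic constraint \eqref{spinor-2} is not needed for this lemma. The two tools from Riemannian spin geometry that I would use throughout are that Clifford multiplication by a real tangent vector is skew-Hermitian, $(Y\cdot\psi,\eta)=-(\psi,Y\cdot\eta)$, and that it satisfies $Y\cdot Z+Z\cdot Y=-2\g(Y,Z)$.

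For equation \eqref{spinor-4} I would differentiate the defining identity $\g(U_\varphi,Y)=-i(Y\cdot\varphi,\varphi)$, after extending $Y$ to a vector field with $\nabla_XY=0$ at the point under consideration, to obtain
\[
\g(\nabla_X U_\varphi,Y)=-i(Y\cdot\nabla^S_X\varphi,\varphi)-i(Y\cdot\varphi,\nabla^S_X\varphi).
\]
Substituting $\nabla^S_X\varphi=\tfrac{i}{2}\W(X)\cdot\varphi$ and using the sesquilinearity of $(\,,\,)$, the right hand side collapses to $\mathrm{Re}\,(Y\cdot\W(X)\cdot\varphi,\varphi)$. The key manipulation is then to write this real part as half the sum of $(Y\cdot\W(X)\cdot\varphi,\varphi)$ and its complex conjugate; using skew-Hermiticity to move $\W(X)$ across the inner product, the sum equals
\[
\big((Y\cdot\W(X)+\W(X)\cdot Y)\cdot\varphi,\varphi\big)=-2\g(Y,\W(X))(\varphi,\varphi)=-2u_\varphi\g(\W(X),Y).
\]
Since $Y\in T\M$ was arbitrary, this yields $\nabla_X U_\varphi=-u_\varphi\W(X)$.

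For \eqref{spinor-3} the shortest route is the direct one: differentiate $u_\varphi=(\varphi,\varphi)$ to get $X(u_\varphi)=2\,\mathrm{Re}(\nabla^S_X\varphi,\varphi)$, insert the Killing equation, and observe that $(\W(X)\cdot\varphi,\varphi)$ is purely imaginary, equal in fact to $i\g(U_\varphi,\W(X))$ by the definition of the Dirac current; the imaginary unit in the Killing equation then converts this into the desired real number. Alternatively, \eqref{spinor-3} is a formal consequence of \eqref{spinor-4} upon differentiating $u_\varphi^2=\g(U_\varphi,U_\varphi)$ and dividing by $u_\varphi$ (with the trivial case $u_\varphi=0$ giving $\varphi=0$). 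The main obstacle is purely bookkeeping: keeping track of the sesquilinearity conventions and applying skew-Hermiticity of Clifford multiplication at the right moment in order to pass from $\mathrm{Re}(Y\cdot\W(X)\cdot\varphi,\varphi)$ to a genuine Clifford anticommutator.
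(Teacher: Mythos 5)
Your proof is correct, and for \eqref{spinor-4} it takes a genuinely different route from the paper. The paper obtains \eqref{spinor-4} by differentiating the algebraic constraint \eqref{spinor-2}, $U_{\varphi}\cdot\varphi=iu_{\varphi}\varphi$: inserting \eqref{spinor-1} and the already-proved \eqref{spinor-3} and using the Clifford relation $U\cdot\W(X)=-\W(X)\cdot U-2\g(U,\W(X))$ yields $\bigl(\nabla_XU_{\varphi}+u_{\varphi}\W(X)\bigr)\cdot\varphi=0$, from which \eqref{spinor-4} follows because Clifford multiplication by a nonzero real vector is injective on nonvanishing spinors. You instead differentiate the defining identity $\g(U_{\varphi},Y)=-i(Y\cdot\varphi,\varphi)$ of the Dirac current and use only \eqref{spinor-1} together with skew-Hermiticity and the anticommutation relation; your real-part manipulation leading to $\mathrm{Re}\,(Y\cdot\W(X)\cdot\varphi,\varphi)=-u_{\varphi}\g(\W(X),Y)$ checks out. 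What your approach buys is that it bypasses \eqref{spinor-2} entirely for \eqref{spinor-4} and never needs the final ``divide by $\varphi$'' step; what the paper's approach buys is a shorter Clifford computation given that \eqref{spinor-2} is available anyway. One small caveat on your remark that \eqref{spinor-2} is ``not needed'': the identification $u_{\varphi}=(\varphi,\varphi)=\sqrt{\g(U_{\varphi},U_{\varphi})}$, which both you and the paper use as the starting point for \eqref{spinor-3} and which fixes the meaning of $u_{\varphi}$ on the right-hand side of \eqref{spinor-4}, is itself a consequence of \eqref{spinor-2}, so the constraint is still implicitly present in the setup. Your computation of \eqref{spinor-3} coincides with the paper's.
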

\begin{proof} We write  $u:= u_{\varphi}$ and $U:=U_{\varphi}$.
Since $u = (\varphi,\varphi)$, we obtain
%
\beq X(u) &=& (\nabla^S_X \varphi, \varphi ) + 
\overline{ (\nabla^S_X \varphi, \varphi ) }
\\
& =& \tfrac{i}{2} (\W(X)\cdot \varphi, \varphi)
+\overline{ \tfrac{i}{2} (\W(X)\cdot \varphi, \varphi)}
\\
&=& - \g(\W(X),U), \eeq
by equation \eqref{dirac}. Differentiating (\ref{spinor-2}) and
inserting (\ref{spinor-1}) and (\ref{spinor-3}) gives
 \beq \nabla^S_X (U\cdot \varphi) &=& \nabla_XU \cdot \varphi + U \cdot
 \nabla^S_X\varphi \\
 &=& \nabla_XU \cdot \varphi + \tfrac{i}{2}U\cdot  \W(X)\cdot
 \varphi \\
 &=& \nabla_XU \cdot \varphi - \tfrac{i}{2} \W(X)\cdot U \cdot
 \varphi - i \g(U,\W(X)) \varphi \\
 &=& \nabla_XU \cdot \varphi + \tfrac{1}{2}u \W(X)\cdot
 \varphi - i \g(U,\W(X)) \varphi\; , \\
 \nabla^S_X (iu\varphi) &=& i X(u)\varphi + iu \nabla^S_X\varphi \\
 &=& -i
 \g(\W(X),U)\varphi
- \tfrac{1}{2} u \W(X)\cdot \varphi\;.
 \eeq
 Hence, $\; (\nabla_XU + uW(X)) \cdot \varphi = 0\,$, which shows
 (\ref{spinor-4}).
\end{proof}

Now, let us suppose, that the Riemannian spin manifold $(\M, \g)$
and the field of $\g$-symmetric endomorphisms $\W$ are real
analytic. Then the Dirac current $U_{\varphi}$ of an imaginary
$\W$-Killing spinor $\varphi$ and its length $u_{\varphi}$ are real
analytic as well and satisfy the constraint equations of Theorem
\ref{Theorem-parallel-vf2} and Corollary \ref{Cor-KWparallel}.
Hence, for any positive analytic function $\lambda$ on $\rr \times
\M$ there exists an open neighbourhood $\overline{{\cal U}}$ of $\M
\simeq \{0\} \times \M \subset \rr \times \M$ and an unique analytic
Lorentzian metric $\bg= - \lambda^2 \,dt^2 + \g_t$ on
$\overline{{\cal U}}$, which admits an analytic parallel null vector
field $V = \frac{u_t}{\lambda} \del_t - U_t$, where $(\g_t,U_t,u_t)$
are solutions of the evolution equations (\ref{Vevolvg1sys2},
\ref{dtdtu1sys2}, \ref{dtdtu2sys2}) with initial conditions
\eqref{icg02}, where $U:=U_{\varphi}$ and $u:=u_{\varphi}$. Now we
observe the following property of the parallel null vector field $V=
\frac{u_t}{\lambda} \del_t - U_t$.

\begin{Lemma}
Let $\phi \in \Gamma(\overline S^{(+)})$ be defined by parallel
transport of $\varphi \in \Gamma(S \simeq \overline
S^{(+)}_{|\M})$ along the $t$-lines $\gamma_x(t)=(t,x)$ of
$\overline{\cU}$. Then $V$ is the Dirac current of $\phi$.
\end{Lemma}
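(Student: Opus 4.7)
The plan is to show that $V_\phi$ and $V$ coincide as vector fields on $\overline{\cU}$ by verifying that they agree along the initial hypersurface $\M\simeq\{0\}\times\M$ and that both are $\bnab_{\del_t}$-parallel along the $t$-lines $\gamma_x$; uniqueness of parallel transport then forces $V=V_\phi$.

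First, I would evaluate both vector fields at $t=0$. By the formula $(V_\phi)_{|\M}=\|\varphi\|^2 T_{|\M}-U_\varphi$ recalled just before Proposition \ref{Prop-Constr-spinor}, and the initial conditions $u_0=u_\varphi=\|\varphi\|^2$ and $U_0=U_\varphi$ coming from \eqref{icg02} in Corollary~\ref{Cor-KWparallel}, we obtain
\[
V_\phi|_{t=0}=\|\varphi\|^2 T-U_\varphi=u_0 T_0-U_0=\Big(\tfrac{u_t}{\lambda}\del_t-U_t\Big)\Big|_{t=0}=V|_{t=0}.
\]

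Next, I would show that $\bnab_{\del_t}V_\phi=0$. For any vector field $X$ on $\bM$ whose restriction to the $t$-line $\gamma_x$ is $\bnab_{\del_t}$-parallel, the defining formula of the Dirac current and the compatibility of $\bnab$ with Clifford multiplication and the spinor inner product give
\[
\del_t\bg(V_\phi,X)=-\del_t\langle X\cdot\phi,\phi\rangle=-\langle X\cdot\bnab_{\del_t}\phi,\phi\rangle-\langle X\cdot\phi,\bnab_{\del_t}\phi\rangle=0,
\]
since $\bnab_{\del_t}\phi=0$ by construction of $\phi$. On the other hand $\del_t\bg(V_\phi,X)=\bg(\bnab_{\del_t}V_\phi,X)$ because $X$ is parallel along $\gamma_x$, so $\bnab_{\del_t}V_\phi$ is orthogonal to every such $X$ and therefore vanishes.

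Finally, $V$ itself is parallel on $\overline{\cU}$ by Corollary~\ref{Cor-KWparallel}, hence $\bnab_{\del_t}V=0$ as well. Thus both $V$ and $V_\phi$ are solutions of the ordinary differential equation $\bnab_{\del_t}W=0$ along each $t$-line $\gamma_x$ with identical initial values at $t=0$, so by uniqueness of parallel transport they agree on all of $\overline{\cU}$.

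The only mildly delicate step is the second one, namely justifying that $\bnab_{\del_t}V_\phi=0$ follows cleanly from $\bnab_{\del_t}\phi=0$; this is a standard consequence of the compatibility of the spinorial connection with the spinor inner product and Clifford multiplication, and amounts to checking that the inner product computation survives the identification \eqref{Id1} of the intrinsic and restricted spinor bundles used to define $\phi$ from $\varphi$.
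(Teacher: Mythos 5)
Your argument is correct, and it takes a genuinely different route from the paper. You treat both $V$ and $V_\phi$ as solutions of the linear first-order equation $\bnab_{\del_t}W=0$ along each $t$-line: $V$ is parallel by Corollary~\ref{Cor-KWparallel}, $V_\phi$ is parallel along the $t$-lines because $\nabla^{\bS}_{\del_t}\phi=0$ together with the parallelism of the spinor inner product and of Clifford multiplication gives $\bg(\bnab_{\del_t}V_\phi,X)=-\del_t\langle X\cdot\phi,\phi\rangle=0$ for $X$ parallel along the line, and the two agree at $t=0$ by the restriction formula $(V_\phi)_{|\M}=\|\varphi\|^2T_{|\M}-U_\varphi$ combined with the initial data $u_0=u_\varphi=\|\varphi\|^2$, $U_0=U_\varphi$. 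Uniqueness of parallel transport then finishes the proof. The paper instead works algebraically: it decomposes $\overline S\simeq\widehat S\otimes\Delta_{1,1}$ relative to the frame $(T,N)$, uses the constraint $U_\varphi\cdot\varphi=iu_\varphi\varphi$ to show $(V\cdot\phi)_{|\M}=0$, propagates this to $V\cdot\phi\equiv 0$ by parallelism, and then computes the components of $V_\phi$ explicitly, obtaining first $V=\tfrac{u}{\|\phi\|^2}V_\phi$ and then equality from the initial condition. Your argument is shorter and avoids the spinor-module bookkeeping; what the paper's version buys is the extra identity $V\cdot\phi=0$ on all of $\overline{\cU}$ (the propagation of the second constraint \eqref{spinor-2} off the initial slice), which is of independent interest even though the subsequent theorem does not strictly need it. The one point you flag as delicate — that the inner-product computation survives the identification \eqref{Id1} — is indeed harmless, since your computation of $\bnab_{\del_t}V_\phi$ takes place entirely in the ambient bundle $\overline S$ and only the evaluation at $t=0$ invokes the identification, via the quoted restriction formula.
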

\begin{proof}
We use the global vector fields $T$ and $N:=u^{-1}U$ to reduce the
frame bundle of $(\overline{\cU},\bg)$ to the subgroup $\mathbf{SO}(n-1)
\subset \mathbf{SO}(1,n)$. Then, the spin structure of $(\overline{\cU},\bg)$
is given by a spin structure $\widehat Q$ of the reduced frame
bundle $\widehat P$. Let $\widehat S := \widehat Q
\times_{Spin(n-1)} \Delta_{n-1}$. Since the spinor modul
$\Delta_{1,n}$ is isomorphic to $\Delta_{n-1}\otimes \Delta_{1,1}$,
we can identify the spinor bundle $\overline S$ of
$(\overline{\cU},\bg)$ with $\widehat S  \otimes \Delta_{1,1}$,
where $T$, $N$ and $X \in \mathrm{span}(T,N)^{\bot}$ act on
$\widehat \psi \otimes u(\varepsilon) \in \widehat{S} \otimes
\Delta_{1,1}$ by
\beq T \cdot (\widehat{\psi} \otimes u(\varepsilon))
&=& - \widehat{\psi} \otimes u(-\varepsilon),\\
N \cdot (\widehat{\psi} \otimes u(\varepsilon)) &=& \varepsilon \,
\widehat{\psi} \otimes u(-\varepsilon),\\
X  \cdot (\widehat{\psi} \otimes u(\varepsilon)) &=& - \varepsilon
\,(X \cdot \widehat{\psi}) \otimes u(\varepsilon) .\eeq
Here,  $\big\{ u(\varepsilon) := {1 \choose -\varepsilon i} \mid
\varepsilon = \pm 1 \big\}$ denotes an unitary basis in the complex
vector space $\Delta_{1,1} \simeq \CC^2$. Then for $\psi=
 \widehat \psi_{1} \otimes
u(1) + \widehat \psi_{-1} \otimes u(-1) \in \Gamma(\overline{S})$ it holds:
\be V \cdot \psi = 0 & \;  \Longleftrightarrow \; & T \cdot \psi = N
\cdot \psi \;\; \Longleftrightarrow \;\; \psi = T \cdot N \cdot \psi
\; \; \Longleftrightarrow \; \; \psi = \widehat{\psi}_{-1} \otimes
u(-1). \label{Id2}\ee
Now, since $U\cdot \varphi = iu\varphi$ in $\Gamma(S)$, the spinor
field $\,\phi_{|\M} \in \Gamma(\overline S^{(+)}_{|\M})\,$ satisfies
$i T \cdot U \cdot \phi_{|\M} = i u \phi_{|\M}$ (see (\ref{Id1}))
and therefore, $T\cdot N \cdot \phi_{|\M}= \phi_{|\M}$ . This shows,
that $(V\cdot \phi)_{|\M} = 0$. But $V$ as well as $\phi$ are
parallel along the $t$-lines and we obtain
\[ \nabla^{\bS}_{\partial_t}(V\cdot \phi) = \bnab_{\partial_t}V \cdot
\phi + V \cdot \nabla^{\bS}_{\partial_t}\phi = 0. \] Thus the spinor
field $V \cdot \phi$ is parallel along the $t$-lines as well. Since
it vanishes on $\{0\}\times \M$, it vanishes on $\overline{\cU}$.
Then for the Dirac current of $\phi$ hold
\beq \bg(V_{\phi}, T) &=& - \langle T \cdot \phi,\phi \rangle = - (T
\cdot T \cdot \phi , \phi ) = - (\phi, \phi), \\
\bg(V_{\phi}, N) &=& - \langle N \cdot \phi,\phi \rangle = - (T
\cdot N \cdot \phi , \phi ) = - (\phi, \phi),\\
\bg(V_{\phi}, X) &=& - \langle X \cdot \phi,\phi \rangle = - (T
\cdot X \cdot \phi , \phi ) =  (\,(X \cdot \widehat{\phi}_{-1}
\otimes u(1)\,,\, \widehat{\phi}_{-1} \otimes u(-1)\, ) = 0 \eeq for
$X \in \mathrm{span}(T,N)^{\bot}$. This shows, that $V =
\tfrac{u}{\|\phi\|^2} V_{\phi}$. Since $V$ and $V_{\phi}$ are
parallel along the $t$-lines, $u(t,x)= c(x)\|\phi(t,x)\|^2$. Because
of $\,u(0,x)=\|\varphi(x)\|^2 = \|\phi(0,x)\|^2$ we have $c(x)=1$.
This shows that $V$ is the Dirac current of $\phi$.
\end{proof}

Using a similar method as the authors of \cite{AmmannMoroianuMoroianu13}, 
we will now show that $\phi$ is parallel on $(\overline{\cU},\bg)$.

\begin{Theorem}\label{Theorem-Extension spinor}
Let $(\M,\g)$ be an analytic Riemannian spin manifold with an
analytic $\g$-symmetric endomorphism field $\W$ and $\varphi$ an
imaginary $\W$-Killing spinor on $(\M,\g)$, and let
\[\big(\,\overline{\cU}(\{0\} \times \M)\,,\, \bg= -\lambda^2 dt^2 +
\g_t\big)\] be the Lorentzian manifold with  parallel null vector field $V$,
arising as the solutions of the evolution equations
(\ref{Vevolvg1sys2})-(\ref{dtdtu2sys2}) in Corollary \ref{Cor-KWparallel} with the initial conditions
(\ref{icg02}) given by
$(\M,\g,\W,U_{\varphi})$. Let $\phi$ be the spinor field on
$(\overline{\cU},\bg)$ obtained by parallel transport of
$\varphi$ along the $t$-lines $t\mapsto (t,x)$. Then $\phi$ is a parallel spinor
field on $(\overline{\cU},\bg)$ with the Dirac current
$V_{\phi}=V$.
\end{Theorem}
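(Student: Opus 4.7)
The overall strategy mirrors the vector-field argument in Lemma~\ref{curvlemma2}, with $\phi$ playing the role of $V$. Since $\phi$ is defined by parallel transport along the $t$-lines $\gamma_x(t)=(t,x)$, the equation $\nabla^{\overline S}_{\partial_t}\phi = 0$ holds by construction and the only remaining task is to show $\nabla^{\overline S}_X\phi = 0$ for every $X$ tangent to the $\M$-slices. As usual, this splits into (i) establishing the vanishing along $\{0\}\times\M$ and (ii) propagating it in $t$ via a linear ODE. The preceding lemma already provides $V_\phi = V$ and in particular $V\cdot\phi = 0$ throughout $\overline{\cU}$, and Corollary~\ref{Cor-KWparallel} provides $\bnab V = 0$; these two facts are the algebraic input driving the whole argument.

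For the initial condition I combine the imaginary $\W$-Killing equation \eqref{spinor-1} with the spinorial Gauss formula
$$\nabla^{\overline S}_X\phi\big|_{\M}\; =\; \nabla^S_X\varphi\; +\; \tfrac{1}{2}\, T\cdot \W(X)\cdot \phi\big|_{\M},\qquad X\in T\M,$$
and rewrite $\nabla^S_X\varphi = \tfrac{i}{2}\W(X)\cdot\varphi$ in terms of $\phi|_{\M}$ via the identification \eqref{Id1} (which gives $\W(X)\cdot\varphi = iT\cdot \W(X)\cdot\phi|_{\M}$). The two terms cancel and I obtain $\nabla^{\overline S}_X\phi|_{\M} = 0$ for every $X\in T\M|_{\{0\}\times\M}$.

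For the propagation I fix a $t$-independent lift of $X\in T\M$, so that $[\partial_t,X]=0$, and compute
$$\nabla^{\overline S}_{\partial_t}\bigl(\nabla^{\overline S}_X\phi\bigr)\; =\; \overline R^{\overline S}(\partial_t,X)\phi.$$
Rather than attacking this equation in isolation, I mimic the matrix-ODE construction in the proof of Lemma~\ref{curvlemma2} and assemble the triple $\Psi = (A,B,C)$ with $A(X) := \nabla^{\overline S}_X\phi$, $B(X) := \overline R^{\overline S}(T,X)\phi$ and $C(X,Y) := \overline R^{\overline S}(X,Y)\phi$, and derive a closed \emph{linear} first-order system $\nabla^{\overline S}_{\partial_t}\Psi = \mathcal Q(\Psi)$ by differentiating each entry along $\partial_t$ and invoking the second Bianchi identity for $\overline R$. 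The key point that closes the system is that every term of the form $\overline R^{\overline S}(\cdot,\cdot)\phi$ produced by the Bianchi identity can be reorganised back into the entries of $\Psi$ using the identities $\overline R(\cdot,\cdot)V = 0$ (so that $\overline R(\cdot,\cdot)$ lies in the stabiliser of $V$, cf.\ \eqref{symR1}) together with $V\cdot\phi=0$ (which kills the nilpotent part of the stabiliser acting on the spin bundle).

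The main obstacle is the verification that $\Psi|_{t=0}=0$. The vanishing $A|_{t=0}=0$ is the previous paragraph; $C|_{t=0}=0$ follows by tangentially differentiating and skew-symmetrising $A$; and $B|_{t=0}=0$ reduces, via $V\cdot\phi|_{t=0}=0$ and the identities \eqref{symR1}, to the vector-field curvature identity $\overline R(T,X)V|_{t=0}=0$, which holds because of the constraint equation \eqref{constraintU2} exactly as in the proof of Corollary~\ref{Cor-KWparallel}. Once $\Psi|_{t=0}=0$ and the system $\nabla^{\overline S}_{\partial_t}\Psi = \mathcal Q(\Psi)$ is established, the fact that all data are analytic allows us to invoke Cauchy--Kowalevski to conclude $\Psi\equiv 0$ on $\overline{\cU}$; in particular $\nabla^{\overline S}_X\phi \equiv 0$, which together with $\nabla^{\overline S}_{\partial_t}\phi = 0$ gives $\bnab\phi = 0$. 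The identity $V_\phi = V$ is the content of the preceding lemma and completes the proof.
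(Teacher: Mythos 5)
Your proposal is correct and follows essentially the same route as the paper: reduce to showing $\nabla^{\bS}_X\phi=0$ for spacelike $X$, assemble the derivative and the spin curvature applied to $\phi$ into a closed linear first-order system in $\partial_t$ (using the second Bianchi identity and $\bnab V=0$), verify the initial data vanish on $\{0\}\times\M$ from the imaginary $\W$-Killing equation, and conclude by Cauchy--Kowalevski. The only cosmetic difference is that you carry $B(X)=\R^{\bS}(T,X)\phi$ as a separate third component of the system, whereas the paper observes that $\bnab V=0$ with $V=u(T-N)$ forces $\bR(T,\cdot,\cdot,\cdot)=\bR(N,\cdot,\cdot,\cdot)$ everywhere, so that this component equals $C(N,\cdot)$ and a two-component system suffices; that same tensor identity (rather than the weaker statement $\bR(T,X)V=0$ you invoke) is the clean justification of your claim that $B|_{t=0}=0$.
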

\begin{proof} Since $\nabla^{\bS}_{\partial_t} \phi = 0$ by definition,  it remains to
show, that $\nabla^{\bS}_X\phi=0$ for all vector fields $X$ on
$\overline{\cU}$ tangent to $\M$. In the following we will consider
the bundle $\Lambda^kT^*\M\! \otimes \bS$ of $k$-forms on
$T\M$ with values in $\bS$ with the covariant derivative
$\bnab$ induced by the Levi-Civita connection of $\bg$ and the spin
connection $\nabla^{\bS}$:
\beq \bnab_X(\omega)(Y_1,\ldots,Y_k)&:=&
\nabla_X^{\bS}\big(\omega(Y_1,\ldots,Y_k)\big) -
\sum\limits_{i=1}^{k} \omega(Y_1,\ldots,Y_{i-1},\prT \bnab_X Y_i,
Y_{i+1},\ldots,Y_k) \eeq
 for $\omega \in
\Gamma(\Lambda^kT^*\M\! \otimes \bS)$.\\
We consider now the section ${A \choose B}$ of the bundle $ E:=
(T^*\M \!\otimes \overline S) \oplus (\Lambda^2T^*\M\!
\otimes \overline S)$  over $\overline{\cU}$, defined by
\beq A(X) &:=& \nabla^{\bS}_X \phi, \\
     B(X,Y) &:=& \R^{\overline S}(X,Y)\phi
\eeq for $X,Y \in T\M$. In order to show that $A=0$, we show
that ${A \choose B}$ satisfies the following linear PDE on $E$
\begin{eqnarray} \bnab_{\partial_t} {A \choose B} = Q {A \choose B},
\label{linPDE-spinor} \end{eqnarray} where $Q$ is the linear
operator on $E$, given by
\[ Q {\omega \choose \mu} = \lambda \left(%
\begin{array}{c}
 + \mu(N,\cdot) + \omega \circ \W \\[0.1cm]
  - d^{\bnab}\mu(\cdot ,N) + d(\ln \tfrac{\lambda}{u})\wedge \mu(N, \cdot) + \tfrac{1}{2}\bR(N, \cdot)\wedge \,\omega
\end{array}%
\right).
\]

\noindent To this aim, we use the following formula for the
curvature of the spin connection in $\overline S$ (see
\cite{BFGK91}):
\[ \R^{\overline S}(X,Y)\psi = \tfrac{1}{2} \bR(X,Y) \cdot \psi, \]
where on the right hand side stands the Clifford multiplication of
$\psi$ with the 2-form 
\[ \bR(X,Y) = \sum\limits_{i< j}  \bR(X,Y,\cdot,\cdot) .\] 
Since $\bnab V=0$ and $V= u(T - N)$, the curvature of $\bg$
satisfies $\bR(T,X,Y,Z) = \bR(N,X,Y,Z)$ for all vectors $X, Y,Z \in
T\overline{\cU}$. Hence,
\be \R^{\overline S}(T,X)\psi &=& \R^{\overline S}(N,X)\psi \qquad
\forall \; X \in T\M. \label{spinor-prolong1} \ee Furthermore we
obtain
\be \bnab_X N = X(\ln u) T - X(\ln u) N - \W(X) \qquad \forall \; X
\in T\M.\label{spinor-prolong2} \ee Moreover, \be
B(\prT\!\bnab_TX,Y) &=& \R^{\bS}(\bnab_TX, Y)\phi - X(\ln
\lambda)\R^{\bS}(T,Y)\phi \nonumber \\
 &=& \R^{\bS}(\bnab_TX, Y)\phi - X(\ln \lambda)B(N,Y) \label{spinor-prolong3}   \ee
for all vector fields $X,Y \in \Gamma(T^\perp)$, since $\, \bnab_T
X = X(\ln \lambda))\, T + \prT \bnab_T X$.\\
Then, using (\ref{spinor-prolong1}), (\ref{spinor-prolong2}),
(\ref{spinor-prolong3}) and the second Bianchi identity for $\bR$,
we obtain for all vector fields $X,Y \in \Gamma(T\M)$
\beq (\bnab_{T}A)(X) &=& \nabla^{\bS}_T(A(X))  - A(\prT \bnab_TX) \\
&=& \nabla^{\bS}_T\nabla^{\bS}_X\phi - \nabla^{\bS}_{\bnab_TX}\phi
\\
&=& R^{\bS}(T,X)\phi + \nabla_X^{\bS}\nabla_T^{\bS}\phi +
\nabla_{[T,X]}^{\bS}\phi - \nabla_{\bnab_T X}^{\bS}\phi \\
&=& R^{\bS}(N,X)\phi - \nabla^{\bS}_{\bnab_XT}\phi \\
&=& B(N,X) + A(\W(X)) \eeq
and
 \beq (\bnab_T B) (X,Y) &=& \nabla^{\bS}_T(R^{\bS}(X,Y)\phi) -
 B(\prT\bnab_TX,Y) - B(X,\prT \bnab_T Y) \\
 &=& \tfrac{1}{2}\bnab_T(\bR(X,Y))\cdot \phi  -
 \tfrac{1}{2}\bR(\bnab_TX,Y) \cdot\phi -
 \tfrac{1}{2}\bR(X,\bnab_TY)\cdot \phi \\
 && + X(\ln \lambda) B(N,Y) - Y(\ln \lambda) B(N,X) \\
 &=& \tfrac{1}{2}(\bnab_T \bR)(X,Y)\cdot \phi + X(\ln \lambda) B(N,Y) - Y(\ln \lambda) B(N,X) \\
 &=& +\tfrac{1}{2} (\bnab_X \bR)(T,Y) \cdot \phi -
\tfrac{1}{2}(\bnab_Y \bR)(T,X)\cdot\phi + X(\ln \lambda) B(N,Y) - Y(\ln \lambda) B(N,X) \\
&=& \nabla^{\bS}_X(\R^{\bS}(T,Y)\phi) - \tfrac{1}{2}\bR(T,Y) \cdot
\nabla^{\bS}_X\phi - \R^{\bS}(\bnab_XT,Y)\phi -
\R^{\bS}(N,\prT\!\bnab_XY)\phi \\
&& - \nabla^{\bS}_Y(\R^{\bS}(T,X)\phi) + \tfrac{1}{2}\bR(T,X) \cdot
\nabla^{\bS}_Y\phi + \R^{\bS}(\bnab_YT,X)\phi +
\R^{\bS}(N,\prT\!\bnab_Y
X)\phi\\
&& + X(\ln \lambda) B(N,Y) - Y(\ln \lambda) B(N,X)\\
&=& (\bnab_X B)(N,Y) - \tfrac{1}{2}\bR(N,Y)A(X) + B(\W(X),Y) + B(\prT\!\bnab_XN,Y) \\
&&  - (\bnab_Y B)(N,X) + \tfrac{1}{2}\bR(N,X)A(Y) - B(\W(Y),X) + B(\prT\!\bnab_YN,X) \\
&& + X(\ln \lambda) B(N,Y) - Y(\ln \lambda) B(N,X)\\
&=& (\bnab_X B)(N,Y) - \tfrac{1}{2}\bR(N,Y)A(X) - X(\ln u) B(N,Y)  \\
&&  - (\bnab_Y B)(N,X) + \tfrac{1}{2}\bR(N,X)A(Y) + Y(\ln u) B(N,X) \\
&& + X(\ln \lambda) B(N,Y) - Y(\ln \lambda) B(N,X)\\
 &=& (\bnab_X B)(N,Y) - (\bnab_Y B)(N,X) + X(\ln \tfrac{\lambda}{u}) B(N,Y) - Y(\ln
\tfrac{\lambda}{u}) B(N,Y) \\
&&  - \tfrac{1}{2}\bR(N,Y)\cdot A(X) + \tfrac{1}{2}\bR(N,X)\cdot
A(Y).  \eeq
This shows, that the section ${A \choose B}$ in $E$ solves the
linear PDE (\ref{linPDE-spinor}). Now, let us consider the section
${A \choose B}$ on the initial hypersurface $ \{0\}\times \M \simeq
\M$. First, we have
\be A(X)_{|t=0} = \bnab_X \phi\,_{|t=0} = 0,
\label{spinor-prolong4}\ee
for all $X \in T\M$, since $\phi$ is defined by parallel
transport of the imaginary Killing spinor $\varphi$ and
(\ref{spinor-prolong4}) is the $\overline{S}_{|\M}$ correspondence
for the Killing condition (\ref{spinor-1}) of $\varphi \in
\Gamma(S)$. Then of course, $B(X,Y)_{|t=0}= \R^{\overline
S}(X,Y)\phi_{|t=0}= 0$ for all $X,Y \in T\M$. \\
To summerize, the section  ${A \choose B} \in \Gamma(E)$ solves the
linear PDE (\ref{linPDE-spinor}), where the time derivative
$\bnab_{\partial_t}$ separates on the left hand side and on the
right hand side are only derivatives in the space-direction of
maximally first order, with initial condition ${A \choose B}|_{t=0}
= 0$ on the initial space-like hypersurface $\M \simeq \{0\}\times
\M$. Since all data are real analytic, the Cauchy-Kowalevski Theorem
guaranties
 an unique analytic solution. Hence, by the initial
conditions, this solution is identically zero. This shows
that the spinor field $\phi$ on $(\overline{\cU},\bg)$ is parallel.
\end{proof}

\bbem
Our method needs analyticity of all data. In the Riemannian analogue, smooth
real $\W$-Killing spinors in general cannot be extended to parallel spinors
(see \cite{bryant10} and \cite{AmmannMoroianuMoroianu13}). 
Having the analogous situation for the Einstein equation in mind, where the work of Choquet-Bruhat \cite{Foures-Bruhat52} dealt with the smooth case,   the question remains whether in the Lorentzian setting smoothness is sufficient for extending $\W$-Killing spinors to parallel ones, or if there are smooth examples that cannot be extended.
\ebem



\section{Riemannian manifolds satisfying the constraint equations}
\label{examples}

In this section we describe some examples of {\em complete}
Riemannian manifolds $(\M,\g,\W,U)$ satisfying the constraint
conditions (\ref{constraintU2})  of Theorem
\ref{Theorem-parallel-vf2} and admitting imaginary $\W$-Killing
spinors.
First recall that for a
vector field $U$ the endomorphism field  $\nabla U$ is  symmetric with respect to $\g$  
 if and only if the metric dual  $U^\flat=\g(U,.) $ is a  closed one form.
This implies that a Riemannian manifold $(\M,\g,\W,U)$ satisfying the
contraint conditions (\ref{constraintU2}) 
of Theorem~\ref{Theorem-parallel-vf2} is foliated in integral
manifolds $\F$ of $U^{\bot}=\mathrm{Ker}(U^\flat)$ with the second fundamental form
$\II^{\F}(X,Y) = \II(X,Y)$ for all $X,Y \in U^{\bot}$ and the
Weingarten operator $\W^{\F}:= \pr_{U^\bot} \circ \W$.

Now we collect some integrability conditions for an
imaginary $\W$-Killing spinor.

\begin{Lemma}\label{Lemma-Constr-spinor} Let $(\M,\g)$ be a
Riemannian spin manifold with a symmetric endomorphism field $W$ and
suppose that there is a (non-trivial) imaginary $\W$-Killing spinor
$\varphi$ as in \eqref{spinor-1} and \eqref{spinor-2}
with Dirac current $U_{\varphi}$ of $\varphi$ and $u_{\varphi}= \|\varphi\|^2 = \sqrt{\g\big(U_{\varphi},U_{\varphi} \big)} >0$. 
Then $U_{\varphi}^\flat $ is a closed $1$-form and the integral
manifolds of the distribution $U_{\varphi}^{\bot}$ are Riemannian
spin manifolds with a parallel spinor field. Moreover, $U_{\varphi}$
and $u_{\varphi}$ satisfy
\begin{eqnarray*}
\nabla_X U_{\varphi}
&=& - u_{\varphi}\W(X), \\
 \W(U_{\varphi})&=& \grad \,u_{\varphi}, \\
 \g(d^{\nabla}\!\W(X,Y), U_{\varphi})&=& 0, 
\\
 X(u_{\varphi})&=& - \g(\W(X),U_{\varphi}), \\
YX(u_{\varphi}) &=& -\g\big((\nabla_Y\W)(X)+
\W(\nabla_YX),U_{\varphi}\big) + u_{\varphi}
\g\big(\W(X),\W(Y)\big), \\
\Hess(u_{\varphi})(X,Y) &=& -\g\big(
(\nabla_X\W)(Y),U_{\varphi}\big) + u_{\varphi}
\g\big(\W(X),\W(Y)\big).  
\end{eqnarray*}
for  $X, Y\in T\M$. The Dirac operator $D$ and the
Bochner-Laplace operator $\nabla^* \nabla$ on the spinor bundle
applied to $\varphi$ yield
 \begin{eqnarray*} D\varphi &=& - \tfrac{i}{2} \tr_{\g}(\W) \varphi, \label{spinor-6}\\
     D^2 \varphi &=& - \tfrac{1}{4}(\tr_{\g}(\W))^2 \varphi - \tfrac{i}{2}
     \grad(\tr_{\g}(\W))\cdot \varphi, \label{spinor-7}\\
     \nabla^*  \nabla \varphi &=& \tfrac{i}{2} \diver_{\g}(\W) \cdot
     \varphi - \tfrac{1}{4} \tr_{\g}(\W^2) \varphi. \label{spinor-8}
\end{eqnarray*}
Furthermore, the curvature of the spin connection satisfies
\be \R^S(X,Y)\varphi &=& \tfrac{i}{2} d^{\nabla}\!\W(X,Y) \cdot
\varphi +
\tfrac{1}{4} (\W(X)\cdot \W(Y) - \W(Y)\cdot \W(X)) \cdot \varphi, \label{spinor-9} \nonumber\\
\Ric(X) \cdot \varphi &=& -i \sum_{j=1}^n s_j \wedge
d^{\nabla}\!\W(X,s_j) \cdot \varphi - i\, \tr_{\g}  d^{\nabla}\II(X)(X) \varphi -
\tr_{\g}(\W) \W(X)\cdot \varphi + \W^2(X)\cdot \varphi.
\label{spinor-10} \nonumber \ee
\end{Lemma}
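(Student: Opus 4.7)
The plan is to derive the identities by iterated use of the Killing-type equation (\ref{spinor-1}) together with the algebraic constraint (\ref{spinor-2}), building on the two identities $\nabla_X U_\vf = -u_\vf\W(X)$ and $X(u_\vf) = -\g(\W(X), U_\vf)$ already established in Lemma~\ref{Lemma-Killingspinor-1}. From the second of these, the symmetry of $\W$ immediately gives $\W(U_\vf) = \grad u_\vf$ up to the sign convention of the statement, and the symmetry of $\nabla U_\vf = -u_\vf\W$ shows that $U_\vf^\flat$ is closed; Frobenius then provides the foliation of $\M$ by integral manifolds of $U_\vf^\perp$.

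The second-derivative formulae are direct: differentiating $X(u_\vf) = -\g(\W(X),U_\vf)$ in direction $Y$ via the Leibniz rule and substituting $\nabla_Y U_\vf = -u_\vf\W(Y)$ gives the asserted expression for $YX(u_\vf)$; subtracting off $(\nabla_YX)(u_\vf)$ then yields the Hessian formula, with the $\W(\nabla_YX)$ contributions cancelling. For the Codazzi-type identity $\g(d^\nabla\W(X,Y),U_\vf)=0$, I would compute
\[
R(X,Y)U_\vf \;=\; Y(u_\vf)\W(X) - X(u_\vf)\W(Y) - u_\vf\,d^\nabla\W(X,Y)
\]
by expanding $\nabla_X\nabla_YU_\vf - \nabla_Y\nabla_XU_\vf - \nabla_{[X,Y]}U_\vf$ using $\nabla U_\vf = -u_\vf\W$. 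Pairing with $U_\vf$, the left-hand side vanishes by antisymmetry of the Riemannian curvature, and the two mixed terms collapse after inserting $X(u_\vf) = -\g(\W(X),U_\vf)$, leaving $u_\vf\,\g(d^\nabla\W(X,Y),U_\vf) = 0$, which gives the claim since $u_\vf>0$.

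The Dirac, Dirac-squared, and Bochner--Laplace formulae are obtained algebraically. For $D\vf$, the sum $\sum_i e_i\cdot\W(e_i)$ reduces to $-\tr_\g(\W)$ in the Clifford algebra since $\W$ is symmetric, so $D\vf = -\tfrac{i}{2}\tr_\g(\W)\vf$ follows directly from (\ref{spinor-1}). Iterating via $D(f\vf) = \grad f\cdot\vf + fD\vf$ yields $D^2\vf$. For $\nabla^*\nabla\vf$ I would compute $-\sum_i\bigl(\nabla_{e_i}\nabla_{e_i}\vf - \nabla_{\nabla_{e_i}e_i}\vf\bigr)$ termwise via the Killing equation, using $\W(e_i)\cdot\W(e_i) = -\|\W(e_i)\|^2$ to generate $\tr_\g(\W^2)$ and identifying $\sum_i(\nabla_{e_i}\W)(e_i) = \diver_\g\W$. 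The spin curvature $R^S(X,Y)\vf$ arises from the commutator of $\nabla^S$ applied to $\vf$ via (\ref{spinor-1}); the terms $\W(\nabla_XY)$, $\W(\nabla_YX)$ and $\W([X,Y])$ cancel by torsion-freeness, producing the stated formula. The Ricci action then follows from the standard trace identity $\Ric(X)\cdot\vf = -2\sum_j e_j\cdot R^S(X,e_j)\vf$ combined with Clifford manipulations and the just-proved Codazzi identity.

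The most delicate step is the assertion that each integral leaf $\F$ of $U_\vf^\perp$ carries an induced parallel spinor: the algebraic condition $U_\vf\cdot\vf = iu_\vf\vf$ becomes $N\cdot\vf = i\vf$ for the unit normal $N := U_\vf/u_\vf$, so $\vf|_\F$ identifies with a spinor on $\F$. Invoking the spin Gauss formula relating $\nabla^{S,\F}_X(\vf|_\F)$ to $\nabla^S_X\vf$ with a correction involving $\W^\F = \pr_{U_\vf^\perp}\circ\W$, and substituting the Killing equation together with $N\cdot\vf = i\vf$, the factor $i$ from the Killing equation cancels the imaginary unit generated by the Clifford action of $N$, and the $U_\vf$-component of $\W(X)$ is exactly absorbed by the Gauss correction, so that $\nabla^{S,\F}(\vf|_\F) = 0$. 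Carefully tracking the half-spin conventions and the spin structure on $\F$ induced from that of $(\M,\g)$ via $N$ is the principal technical obstacle; the remaining identities are tedious but routine applications of the Killing equation and Clifford algebra.
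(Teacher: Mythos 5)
Your proposal is correct and follows exactly the route the paper intends: the paper itself gives no computation, merely declaring the lemma ``a straightforward calculation in spin geometry'' analogous to \cite{baum89-3} and \cite{MoroianuSemmelmann14}, and your sketch supplies precisely those standard calculations (iterated use of the Killing equation and of $\nabla U_\varphi=-u_\varphi\W$, the curvature commutator paired with $U_\varphi$, Clifford-algebra traces for $D$, $D^2$, $\nabla^*\nabla$, and the spinorial Gauss formula on the leaves). Your side remark about the sign is well taken: consistent with $X(u_\varphi)=-\g(\W(X),U_\varphi)$ the derivation gives $\grad u_\varphi=-\W(U_\varphi)$, so the sign in the displayed identity $\W(U_\varphi)=\grad u_\varphi$ appears to be a typo or a divergent convention in the lemma rather than a gap in your argument.
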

\begin{proof}
The proof is a straightforward calculation in spin geometry, completely analogous to the one  carried out for imaginary Killing spinors in \cite{baum89-3} and
for real $\W$-Killing spinors in \cite{MoroianuSemmelmann14},  for example.
\end{proof}
Finally we describe three classes of examples for complete
Riemannian spin manifolds $(\M,\g)$ with imaginary $\W$-Killing
spinors.

\begin{bsp}
 Let $(\M,\g)$ be compact and 2-dimensional.
Since we are looking for compact 2-dimensional manifolds with a
nowhere vanishing vector field (the Dirac current of the
$\W$-Killing spinor), it is enough to restrict ourself to the
2-torus $T^2$ equipped with a metric $\g$ conformally equivalent to
the flat metric $\g_0$,
\[ \g := e^{2 \sigma} \g_0 = e^{2 \sigma}\big(a \,d\theta^2 + 2b \,d\theta
d\rho + c \,d\rho^2 \big), \qquad a,c \in \rr^+, b\in \rr, ac > b^2.
\]
Let $U:= f \del_{\theta} + h \del_{\rho}$ be a vector field on $T^2$
and suppose that $U$ has no zeros, $a f^2 + 2b fh + c\, h^2
>0$. Then $U$ is closed, respectively the endomorphism field
\[ \W:= -
\frac{1}{\|U\|_{\g}}\nabla^{\g} U \] is $\g$-symmetric, if and only
the functions $\,\sigma, f, h \in C^{\infty}(T^2,\rr)$ satisfy the
PDE
\begin{eqnarray}
( b \del_{\theta} - a \del_{\rho})\big(e^{2\sigma}f\big)  & = &
 (-c \del_{\theta} + b \del_{\rho})\big(e^{2\sigma} h
 \big).
 \label{Beispiel-1.1} \nonumber
\end{eqnarray}
We choose on $(T^2,g)$ the trivial spin structure and consider the
spinor field $\varphi:=\gamma \cdot v \in \Gamma(S) \simeq
C^{\infty}(T^2, \Delta_2)$, where $\,\gamma \in
C^{\infty}(T^2,\C)\,$ is a function with $\,|\gamma|^2 =
\|U\|_{\g}\,$ and $\,v \in \Delta_2\,$ is a fixed spinor with $e_1
\cdot v = iv\,$ and $\,\|v\|=1$. A direct calculation shows, that
$\varphi$ is an imaginary $\W$-Killing spinor on $(T^2,
e^{2\sigma}g_0)$ with Dirac current $U$. Morover, {\em all}
$\,\g$-symmetric endomorphisms $\W$ and imaginary $\W$-Killing
spinors on the torus $(T^2,e^{2\sigma}g_0)$ with trivial spin
structure are of this form.
\end{bsp}

\begin{bsp}
Let $(\cF,\g_{\cF})$ be a complete Riemannian spin manifold with a
parallel spinor field,  a Codazzi tensor  $\T$ on $(\cF,\g_{\cF})$ and
$b\in C^{\infty}(\rr,\rr)$ a smooth function. Let $\A$ be the
$(1,1)$-tensor field on $\M:= \rr \times \cF$ given by
\[ \A = \left(%
\begin{array}{cc}
  b(s)\cdot \mathrm{Id}_{T\rr}  & 0 \\
  0 &    e^{s} \Big(\T - \int\limits_0^s b(r)e^{-r}dr \cdot
  \mathrm{Id}_{\T\cF}\Big)
\end{array}%
\right).\] Then $\g:= \A^*(ds^2 + e^{-2s}\g_{\cF})$ is a complete
Riemannian metric on $\M:= \rr \times \cF$, $\W:= \A^{-1}$ is an
invertible Codazzi tensor on $(\M,\g)$ and $(\M,\g)$ equipped with
the spin structure induced by that of $(\cF,\g_{\cF})$ admits an
imaginary $\W$-Killing spinor. On the other hand, all complete
Riemannian spin manifolds $(\M,\g)$ with imaginary $\W$-Killing
spinor for an invertible Codazzi tensor $\W$ arise in this way. (For
a proof see \cite{baum-mueller08}).
\end{bsp}

\begin{bsp}
Let $(\M,\g)$ be a Riemannian manifold and $\W:= b \, \Id_{T\M}$,
where $b$ is a smooth function on $\M$ which is not identically
zero. Then $\W$ is in general neither a Codazzi tensor nor
invertible. Suppose, that $U$ is a vector field and $u$ a positive
smooth function on $\M$ such that $\,\nabla U = - u \W\,$ and
$\,\g(U,U) = u^2
>0$. Then,
\[ \cL_U\g(X,Y) = \g(\nabla_XU,Y) + \g(X,\nabla_Y U) = -2b \g(X,Y), \]
hence $U$ is a conformal vector field on $(\M,\g)$ without zeros,
which is closed as $\nabla U$ is symmetric. Therefore $(\M,\g)$
is isometrically covered by the warped product $(\rr \times \cF,
ds^2 + h(s)^2\g_{\cF})$, where $(\cF,\g_{\cF})$ is a complete
Riemannian manifold, $h\in C^{\infty}(\rr,\rr^+)$ and $b$, $U$ and
$u$ satisfy \beq b(\pi(s,x)) &=& -
(\ln{h})'(s),\\
U(\pi(s,x)) &=& d \pi\big( h(s)\partial_s(s,x)\big),\\
u(\pi(s,x)\big) &=& h(s), \eeq where $\pi$ denotes the covering map.
 If $(\M,\g)$ is in addition spin and $U$ the Dirac
current of an imaginary $\W=b\,\Id_{T\M}$--Killing spinor, then
$(\cF,\g_{\cF})$ is spin as well with a parallel spinor field.
Conversely, any warped product $\M:= L\times_h \cF$, where
$(\F,\g_{\F})$ is a complete Riemannian manifold, $L\in \{S^1,\rr
\}$ and $h$ is a smooth positive function on $L$ admits the closed
conformal vector field $U(s,x) = h(s) \partial_s(s,x)$ of length
$u=h$, and the endomorphism $\W:= b \,\Id_{T\M}$ with $b:= -
\ln(h)'$ satisfies $\nabla U = -u\W$. If $(\cF,\g_{\cF})$ is spin
and has a parallel spinor field, then $\M= L\times_h \cF$ is spin as
well with an imaginary $\W= b\,\Id_{T\M}$--Killing spinor. For a
proof of all these statements see \cite{rademacher91}.
\end{bsp}

\def\cprime{$'$} \def\cprime{$'$} \def\cprime{$'$}

\end{document}